\numberwithin{equation}{section}
\tikzset{sgplattice/.style={inner sep=1pt,norm/.style={red!50!blue},char/.style={blue!50!black},
  lin/.style={black!50}},cnj/.style={black!50,yshift=-2.5pt,left=-1pt of #1,scale=0.5,fill=white}}
\DeclareFontFamily{U}{mathb}{\hyphenchar\font45}
\DeclareFontShape{U}{mathb}{m}{n}{
      <5> <6> <7> <8> <9> <10> gen * mathb
      <10.95> mathb10 <12> <14.4> <17.28> <20.74> <24.88> mathb12
      }{}
\DeclareSymbolFont{mathb}{U}{mathb}{m}{n}
\DeclareMathSymbol{\righttoleftarrow}{3}{mathb}{"FD}
\theoremstyle{plain}
\newtheorem{prop}{Proposition}[section]
\newtheorem{coro}[prop]{Corollary}
\newtheorem{lemm}[prop]{Lemma}
\theoremstyle{definition}
\newtheorem{rema}[prop]{Remark}
\newtheorem{exam}[prop]{Example}
\def\cO{{\mathcal O}}
\def\cX{{\mathcal X}}
\def\sA{{\mathsf A}}
\def\sD{{\mathsf D}}
\def\fS{{\mathfrak S}}
\def\fS{{\mathfrak S}}
\def\bA{{\mathbb A}}
\def\bP{{\mathbb P}}
\def\bZ{{\mathbb Z}}
\def\bR{{\mathbb R}}
\def\bC{{\mathbb C}}
\def\rH{{\mathrm H}}
\def\Br{\mathrm{Br}}
\def\Pic{\mathrm{Pic}}
\def\Gal{\mathrm{Gal}}
\def\Aut{\mathrm{Aut}}
\def\PGL{\mathsf{PGL}}
\def\lim{\mathrm{lim}}
\def\Cl{\mathrm{Cl}}
\begin{document}
\title[Real cubic threefolds]{Rationality of singular cubic threefolds over $\bR$}

\author[I. Cheltsov]{Ivan Cheltsov}
\address{Department of Mathematics, University of Edinburgh, UK}
\email{I.Cheltsov@ed.ac.uk}

\author[Y. Tschinkel]{Yuri Tschinkel}
\address{
  Courant Institute,
  251 Mercer Street,
  New York, NY 10012, USA
}

\email{tschinkel@cims.nyu.edu}

\address{Simons Foundation\\
160 Fifth Avenue\\
New York, NY 10010\\
USA}

\author[Zh. Zhang]{Zhijia Zhang}

\address{
Courant Institute,
  251 Mercer Street,
  New York, NY 10012, USA
}

\email{zhijia.zhang@cims.nyu.edu}

\date{\today}

\begin{abstract}
We study rationality  properties of real singular cubic threefolds. 
\end{abstract}

\maketitle

\section{Introduction}
\label{sect:intro}

In this note we focus on rationality properties of singular cubic threefolds over the real numbers. 
We are guided by results in equivariant birational geometry in \cite{CTZ}, \cite{CMTZ}, which elucidated
birationality constructions as well as obstructions to birationality, in the equivariant context.

One of the main problems in birational geometry is the (stable) rationality problem; see, e.g., 
\cite{MT}, \cite{CT-bir}, \cite{Chel}, \cite{Pro}, \cite{AP}, \cite{V}, \cite{Schr} for reports on milestones in this area. Classically, one was interested in algebraically closed ground fields, of characteristic zero, 
mostly the complex numbers $\bC$. However, many (stable) rationality constructions rely crucially on results over nonclosed fields, e.g., function fields of smaller-dimensional varieties: in \cite{BCT} this  is used to produce nonrational but stably rational threefolds over $\bC$, via a degree 4 Del Pezzo fibration over $\bP^1$,   
and in \cite{AHTV} rational cubic fourfolds, via a degree 6 Del Pezzo fibration over $\bP^2$. The survey 
\cite{CT-corps} contains many results concerning rationality over arbitrary fields.

Of particular interest are varieties with simple presentations, such as hypersurfaces in (weighted) projective spaces, or linear sections of Grassmannians. 
Among the simplest such varieties are quadric and cubic hypersurfaces. Rationality of quadrics is completely settled: a necessary and sufficient condition is the existence of a rational point. 
For cubics, we do not currently have such criteria. Smooth complex cubic threefolds $X_{\bC}\subset \bP^4$ are irrational \cite{CG}, while the singular ones are rational unless they are cones over smooth cubic curves. 

In recent years, there have been several results concerning rationality of geometrically rational varieties over $\bR$. For example, it is now known that rationality of smooth intersections of two quadrics $X_{2,2}\subset \bP^n$, for $n=5$, is governed by the existence of rational points and lines \cite{HT-quad}. For $n=6$, a necessary and sufficient condition is that $X(\bR)$ is connected \cite{HKT-quad}. 
The papers \cite{BW}, \cite{Ji-coni}, \cite{CTA} explore rationality of conic and quadric bundles over $\bR$.  
%The papers  \cite{BW-arb}, \cite{BW}, \cite{HT-18}, \cite{KP-Fano}, \cite{KuzPro} establish rationality criteria for smooth, geometrically rational Fano threefolds over arbitrary nonclosed fields. 

Here, we focus on rationality of singular cubic threefolds
$$
X=X_{\bR}\subset \bP^4
$$
over $\bR$. Every such $X$ has real points.  
We recall standard rationality constructions, in presence of distinguished loci, over $\bR$:
\begin{itemize}
    \item 
    Projection from a singular point gives a birational map to $\bP^3$,
    \item  Projection from a line and a disjoint plane gives a birational map $X\dashrightarrow \bP^2\times\bP^1$. 
\end{itemize}
In particular, if the cubic is not a cone and there exists a real singular point, then $X$ is rational over $\bR$. 
Hence, throughout the paper, we assume that $X$ is not a cone and that 
$$
X^{\mathrm{sing}}(\bR)=\emptyset.
$$ 
This implies that either the singularities of $X_{\bC}$ are isolated and their number $s=s(X_{\bC})$ is even, i.e., $s=2,4,6,8,10$, or that 
the singular locus has positive dimension. 
By \cite{Avilov-forms}, a real form of a cubic with 10 isolated singularities is rational over $\bR$ -- geometrically, there is only one
such cubic, the {\em Segre} cubic.

Let $\Gamma=\Gal(\bC/\bR)\simeq \bZ/2$ be the group generated by complex conjugation. 
In addition to an action of $\Gamma$ on special loci of $X$, such as the singular locus $X^{\mathrm{sing}}$, we have an action on the geometric Picard group 
$\Pic(\tilde{X}_{\bC})$ of the minimal resolution of singularities $\tilde{X}$ of $X$. 
We investigate (stable) rationality over $\bR$, using the following tools: 
\begin{itemize}
 \item {\em classification}: we compute all possibilities for the action of $\Gamma$, configurations of singularities, and normal forms;  we freely use the terminology and techniques of \cite{CTZ} and \cite{CMTZ};
    \item {\em topology}: if $X(\bR)$ is disconnected then $X$ is not stably rational; 
    \item {\em cohomology}: 
    if
    \begin{equation} 
    \label{eqn:H1}
    \rH^1(\Gamma, \Pic(\tilde{X}_{\bC}))\neq 0, 
    \end{equation}
    then $X$ is not stably rational; we refer to this as the 
    {\bf(H1)}-obstruction.  
\end{itemize}

We summarize the results: 
\begin{itemize}
    \item $s=2$: 
    \begin{itemize}
    \item $2\sA_1$, $2\sA_2$: not rational over $\bR$.
        \item $2\sA_3$ with no plane: criteria for disconnectedness of $X(\bR)$, and examples with $X(\bR)$ connected or disconnected. 
       \item $2\sA_3$ with a plane: rational if and only if there is a real line disjoint from  the plane.
        \item $2\sA_4$: criteria for disconnectedness, and examples with $X(\bR)$ connected or disconnected.
        \item $2\sA_5$: all not stably rational.
        \item $2\sD_4$: criteria for disconnectedness, examples with rational $X$, and examples with $X(\bR)$  disconnected. 
    \end{itemize}

    \item $s=4$: 
    \begin{itemize}
        \item $4\sA_1$ with no plane, $4\sA_2$, $2\sA_2+2\sA_1$ are birational over $\bR$; criteria for disconnectedness, and examples of connected and disconnected $X(\bR)$.
        \item $4\sA_1$ with a plane: rational if and only if there is a real line disjoint from  the plane. 
        \item $2\sA_3+2\sA_1$ with three planes: criteria for disconnectedness, 
        examples with rational $X$, and with $X(\bR)$  disconnected.
         \item $2\sA_3+2\sA_1$ with one plane: always rational.
        \item $2\sD_4+2\sA_1$: rational if and only if it contains three planes.
    \end{itemize}
    \item $s=6$: \begin{itemize}
        \item $6\sA_1$ with no plane: rational if and only if it contains a real cubic scroll.
         \item $6\sA_1$ with one plane: always rational.
         \item $6\sA_1$ with three planes: criteria for disconnectedness, examples with rational $X$, and with $X(\bR)$ disconnected.  
         \item $2\sA_2+4\sA_1$, $2\sA_3+4\sA_1$: always rational. 
    \end{itemize}
    \item $s=8$: rational if and only if it contains three planes. 
\end{itemize}

Geometrically, cubic threefolds with nonisolated singularities are of four types \cite{yokocub}, \cite{Allcub}: with $X^{\rm{sing}}$ consisting of a plane, a line,  a conic, or a twisted quartic. We show that all real forms of such cubics are rational, unless  $X^{\rm{sing}}$ is a conic. In this last case, we have examples with connected or disconnected $X(\bR)$. The rationality of such cubics is open, see, e.g., \cite{CTA}.

\

\noindent
{\bf Acknowledgments:} 
The first author was partially supported by the Leverhulme Trust grant RPG-2021-229.
The second author was partially supported by NSF grant 2301983.

\section{Cohomology}
\label{sect:coho}

Here, we investigate the {\bf (H1)}-obstruction for cubics with isolated singularities. 

\begin{prop}\label{prop:h1sum}
Let $X$ be a real cubic threefold with isolated singularities over $\bC$. We have
    $$
    \rH^1(\Gamma,\Pic(\tilde X_\bC))\ne 0
    $$
    if and only if one of the following holds:
   \begin{enumerate}
       \item $X_\bC$ has $2\sA_5$-singularities,
       \item $X_\bC$ has $2\sD_4+2\sA_1$-singularities, and only one real plane,
       \item $X_\bC$ has $6\sA_1$-singularities in linearly general position, and contains no normal cubic scrolls over $\bR$,
       \item $X_\bC$ has $8\sA_1$-singularities, and only one real plane.
   \end{enumerate}
\end{prop}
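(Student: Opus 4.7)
The plan is a case-by-case analysis over the classification of singular cubic threefolds with isolated singularities and $X^{\mathrm{sing}}(\bR) = \emptyset$ from the introduction. For each type, I would describe $\Pic(\tilde X_\bC)$ as a $\Gamma$-module and then compute its Galois cohomology. The starting point is a presentation of $\Pic(\tilde X_\bC)$: it is generated by the hyperplane class $H$, by the exceptional divisor classes from the minimal resolution of each singular point of $X_\bC$, and by the classes of any ``extra'' divisors on $X$, namely planes contained in $X$ and normal cubic scrolls through collections of nodes. The Galois action permutes complex conjugate pairs of singular points together with their exceptional configurations, and permutes the planes and scrolls among themselves according to their defining equations.

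The main cohomological input is standard: for $\Gamma \simeq \bZ/2$, permutation modules are cohomologically trivial (one has $\rH^1(\Gamma, \bZ) = 0$ for trivial action and $\rH^1(\Gamma, \bZ[\Gamma]) = 0$ by Shapiro's lemma), whereas $\rH^1(\Gamma, \bZ^-) = \bZ/2$ for the sign representation $\bZ^-$. Thus, if no extra divisors are present, $\Pic(\tilde X_\bC)$ splits as a direct sum of copies of $\bZ$ (one for $H$ and one per self-conjugate exceptional divisor) and copies of $\bZ[\Gamma]$ (one per swapped conjugate pair of exceptional divisors), hence is a permutation module and $\rH^1 = 0$. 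Nonvanishing $\rH^1$ can only arise when extra divisors impose relations that produce a sign subquotient. For each of the four cases in the proposition I would verify this mechanism: in $2\sA_5$, the chain of lines on $X_\bC$ joining the two conjugate singular points yields a $\Gamma$-asymmetric relation; in $2\sD_4 + 2\sA_1$ and $8\sA_1$ with only one real plane (as opposed to three planes forming a full $\Gamma$-orbit), the plane classes produce a nonpermutation module after passing to the quotient by relations with $H$ and the exceptional divisors; in $6\sA_1$ with no real scroll, the conjugate pair of scroll classes with its defining relation yields a sign summand. For every configuration not on the list, one checks directly that $\Pic(\tilde X_\bC)$ is a permutation module, either because no extra divisors exist, or because all extra divisors come in compatible full $\Gamma$-orbits.

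The main obstacle will be two-fold. First, explicitly determining the generators, relations, and $\Gamma$-action on $\Pic(\tilde X_\bC)$ for each singular cubic requires combining the normal-form analysis and character-theoretic description from \cite{CTZ} and \cite{CMTZ} with the classical geometry of planes and cubic scrolls through prescribed nodes; in particular one must know precisely which distinguished divisors exist over $\bC$ in each normal form and how complex conjugation acts on them. Second, one must verify systematically that all configurations \emph{not} appearing in the list have permutation-module Picard groups; this amounts to checking many cases and ensuring that the extra divisors always come in $\Gamma$-compatible orbits. Once these structural descriptions are in place, the cohomology computation in each case reduces to a routine check using the principles above.
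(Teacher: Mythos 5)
Your plan is essentially the paper's own proof: the paper likewise reduces the statement to a case-by-case determination of the $\Gamma$-module structure of $\Cl(X_\bC)$ (equivalently $\Pic(\tilde X_\bC)$) for each singularity type, using the permutation-module criterion, with the four listed cases handled by explicit relations among the distinguished divisors and all remaining cases shown to give permutation modules via the classifications in \cite{CTZ}, \cite{CMTZ}. One small correction: in the $2\sA_5$ case the sign subquotient comes not from a chain of lines but from a conjugate pair of degree-$3$ divisors $S,\bar S$ cut out by a quadric, subject to the relation $S+\bar S=2F$ with $F$ a hyperplane class.
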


\begin{proof}
When $X_{\bC}$ has the indicated configurations of singularities, the proof follows from Propositions~\ref{prop:2a5h1}, \ref{prop:2d4+2a1H1}, \ref{prop:6a1h1}, and \ref{prop:8-rat}. In all other cases, the {\bf (H1)}-obstruction is trivial, as shown in \cite{CTZ}, \cite{CMTZ}. 
\end{proof}

We now list representative examples with nontrivial cohomology for each of the cases in Proposition~\ref{prop:h1sum}.

   \begin{exam} 
   \label{exam:2A5-coho}
    $2\sA_5$: 
    Over $\bC$, every such cubic is given by 
    $$
x_1x_2x_3+x_1x_4^2+x_2x_5^2+x_3^3+bx_3x_4x_5=0,\quad b^2\ne -4.
    $$
%    and has nontrivial cohomology \eqref{eqn:H1} from the $C_2$-action generated by 
%    $$
%    \mathbf{(x)}\mapsto (x_2,x_1,x_3,x_5,x_4),
%    $$
%    see \cite[Proposition 5.12]{CMTZ}.
Over $\bR$, every such cubic is given by 
\begin{equation} 
\label{eqn:2a5}
X=\{(x_1^2 + x_2^2)x_3 + x_3^3 - 2x_1(x_4^2-x_5^2) - 4x_2x_4x_5 + bx_3(x_4^2+x_5^2)=0\},
\end{equation}
for some $b\in\bR$, with singular points
$$
[1:i:0:0:0],\quad [1:-i:0:0:0].
$$
Consider the projection from the real line through the singularities
$$
\pi: X\dashrightarrow\bP^2,\quad (x_1,x_2,x_3,x_4,x_5)\mapsto(x_3, x_4, x_5).
$$
The image $\pi(X(\bR))$, given by 
$$
\left\{x_3^2+\frac{b-\sqrt{b^2+4}}{2}x_4^2+\frac{b-\sqrt{b^2+4}}{2}x_5^2\leq 0\right\}\subset\bP^2,
$$
is connected; it is easy to see that $X(\bR)$ is also connected.

The Galois action on $\Pic(\tilde X_\bC)$ is isomorphic to the $C_2$-action considered in \cite[Proposition 5.12]{CMTZ}, which implies that
\begin{equation} 
\label{eqn:C}
\rH^1(\Gamma,\Pic(\tilde X_{\bC}))=\bZ/2\bZ,
\end{equation}
hence $X$ is not stably rational over $\bR.$ 
See Proposition~\ref{prop:2a5h1} for details. 
\end{exam}

\begin{exam} 
\label{exam:6A1}
$6\sA_1$: In the equivariant context, assuming that the singularities are in 
linearly general positions, we obtain nontrivial cohomology from any $C_2\subseteq \Aut(X)$, which does not fix a node \cite[Proposition 7.5]{CTZ}. 
Over $\bR$, we realize this by
\begin{multline*} 
(x_1^2+x_2^2)(x_3+2x_4+39x_5)+2x_1(x_3^2+x_4^2+4x_5^2+x_3x_5)+\\+2x_2(x_3x_4-x_3^2+x_4^2+4x_5^2)+x_4^2x_3+4x_5^2x_3-2x_3^2x_4+39x_3^2x_5=0,
\end{multline*} 
with singular points
$$
[0 : 0 : 0 : -2i : 1],\quad [0 : 0 : 0 : 2i : 1],\quad [0 : -1 : -i : 
1 : 0],
$$
$$
[0 : -1 : i : 1 : 0],\quad [-i : 1 : 0 : 0 : 0],\quad [i : 1 : 0 : 
0 : 0].
$$
The equation 
$$
x_3^2 + x_4^2 + 20x_3x_5 + 4x_5^2=0
$$
cuts out two complex conjugate irreducible divisors on $X_{\bC}$, so that complex conjugation acts nontrivially on the class group $\Cl(X_{\bC})$. Computation in \cite[Proposition 7.5]{CTZ} implies \eqref{eqn:C}; it 
follows that $X$ is not stably rational over $\bR$. 

The image in $\bP^2_{x_3,x_4,x_5}$ of the projection from the line passing through the last two singular points is connected, and given by 
$$
\{  (x_3^2 + x_4^2 + 20x_3x_5 + 4x_5^2)(x_3^2 + x_4^2 - \frac{77}{2}x_3x_5 + 4x_5^2)\geq 0
\}\subset \bP^2_{x_3,x_4,x_5},
$$
which shows that $X(\bR)$ is also connected.
\end{exam}

\begin{exam}
\label{exam:8A1}
$8\sA_1$:
In the equivariant context, $X_{\bC}$ is given by 
    $$
(ax_1+x_2+bx_3)x_4x_5+x_4(x_3^2-x_1^2)+x_5(x_3^2-x_2^2)=0,\quad a,b\in \bC,
    $$
    and cohomology from $C_2$ generated by 
    $$
    \mathbf{(x)}\mapsto (ax_5-x_1,x_4-x_2,x_3,x_4,x_5).
    $$    
\end{exam}

Over $\bR$, we can realize the corresponding 
$\Gamma$-action on cohomology on $X$ given by
\begin{multline*}
  a_1(x_1^2 + x_3^2)x_4 + x_4x_5^2+a_2(
    x_2^2x_5 + x_3^2x_5 + x_4^2x_5)+a_3
    x_3x_4x_5=0,
\end{multline*} 
 with $a_1,a_2,a_3\in\bR$ and singularities at
  $$
 [0 : -i : 0 : 1
: 0], [0 : i : 0 : 1 : 0],
 [-i : 0 : 0 : 0 : 1], [i : 0 : 0 : 0 : 1],
$$
$$ [-i : -i : 1 : 0 : 0], [i : -i : 1 : 0 : 0],
 [-i : 
i : 1 : 0 : 0], [i : i : 1 : 
0 : 0].
$$
A direct computation of $\Cl(X_{\bC})$ as in Proposition~\ref{prop:8-rat} shows that the $\Gamma$-action gives nontrivial cohomology \eqref{eqn:C}.

\begin{exam} 
\label{exam:2D4+2A1}
$2\sD_4+2\sA_1$:
%This case is a degeneration of $8\sA_1$. 
In the equivariant context, $X_{\bC}$ is given by 
    $$  x_1x_2x_3+x_1x_2x_4+x_5^2(a_1x_3+a_2x_4)+a_3x_3x_4x_5=0,\quad a_1,a_2,a_3\in \bC^\times,
    $$
  with nontrivial cohomology from the involution
    $$
    \mathbf{(x)}\mapsto (x_2,x_1,x_4+\frac{a_1-a_2}{a_3}x_5,x_3-\frac{a_1-a_2}{a_3}x_5,x_5).
    $$
    Over $\bR$, we can realize this by 
     $$
     (x_1^2 + x_2^2)x_3 + x_5(x_3^2 +x_4^2) + (b_1x_3+b_2x_4)x_5^2 + ax_5^3=0
     $$
     for $a,b_1,b_2\in \bR$ with $b_2^2<4a.$
   The $\sD_4$-singularities are  
     $$
     [1:i:0:0:0],\quad [1:-i:0:0:0],
     $$
     and the $\sA_1$-singularities are
     $$
     [0:0:1:i:0],\quad [0:0:1:-i:0].
     $$
    In these cases, the $\Gamma$-action gives nontrivial cohomology \eqref{eqn:H1}, see Proposition~\ref{prop:2d4+2a1H1} for more details. 
\end{exam}

The following diagram shows specialization patterns, obtained in investigations of equivariant birationalities in \cite{CTZ}, \cite{CMTZ}. The red-labeled cases (i.e., cubic threefolds with indicated singularity types) admit forms over $\bR$ with nontrivial cohomology 
$
\rH^1(\Gamma, \Pic(\tilde{X}_{\bC}))
$
from a specific action of $\Gamma$; and general cubic threefolds, indicated in black and specializing to these, 
have trivial cohomology. 

%For example, 
%a very general 2-nodal 
%cubic over $\bR$, with complex-conjugated singularities fails stable rationality over $\bR$.

%{
%\[
%\begin{tikzpicture}[commutative diagrams/every diagram]
%\node(l1) at (-6,0)
%{$2\sA_1$};
 %\node(l2) at (-4,1)
 %{$2\sA_2$};
 % \node(l3) at (-2,1)
 %{$2\sA_3$}; 
 %\node(l4) at (0,1)
 %{$2\sA_4$};
 % \node(l5) at (2,1)
 %{\color{red}{$2\sA_5$}};
 %\node(l6) at (1,0)
 %{$4\sA_2$};
 %\node(l7) at (-4,-1)
%{$4\sA_1$};
%\node(l8) at (-2,-1)
%{\color{red}{$6\sA_1$}};
%\node(l9) at (0,-1)
%{$\color{red}{8\sA_1}$};
%\node(l10) at (-5,-3)
%{$2\sA_2+2\sA_1$};
%\node(l11) at (-2,-3)
%{ $2\sA_3+2\sA_1$};
%\node(l12) at (1,-3)
%{\color{red}{$2\sD_4+2\sA_1$}};
%\node(l13) at (-0.5,-2)
%{$2\sD_4$};
%	\path[-]
%       (l1) edge[->] (l2)
 %  (l2) edge[->] (l3)
  %    (l3) edge[->] (l4)
   %      (l4) edge[->] (l5)
    %     (l6) edge[->] (l5)
     %    (l1) edge[->] (l7)
      %   (l7) edge[->] (l8)
       %   (l8) edge[->] (l11) 
       %  (l8) edge[->] (l9) 
        % (l7) edge[->] (l3)
        % (l7) edge[->] (l10)
        % (l10) edge[->] (l11)
         %(l11) edge[->] (l12)
        %  (l8) edge[->] (l13)
         % (l13) edge[->] (l12);
%\end{tikzpicture}
%\]
%}

{
%\begin{figure}
\[
\begin{tikzpicture}[commutative diagrams/every diagram]

\node (26) at (11,4)
	{$10\sA_1$};
\node (23) at (1,4)
	{\color{red}{$2\sA_5$}};

\node (22) at (4.5,4)
	{\color{red}$2\sD_4+2\sA_1$};
 \node (24) at (8.5,4)
	{$2\sA_3+4\sA_1$};

	\node (14) at (1,2)
	{$2\sA_4$};
	\node (18) at (2.5,2)
	{$4\sA_2$};
	
	\node (12) at (4.5,2)
	{$2\sD_4$};
	\node (15) at (6.5,2)
	{$2\sA_3+2\sA_1$};
	\node (16) at (9,2)
	{$2\sA_2+4\sA_1$};
	\node (17) at (11,2)
	{\color{red}$8\sA_1$};

\node (6) at (3,-1)
	{$2\sA_3$};

\node (7) at (6.3,-1)
	{$2\sA_2+2\sA_1$};
\node (9) at (9.5,-1)
	{{\color{red}{$6\sA_1$}}};

\node (4) at (8,-2.5)
	{$4\sA_1$};
 \node (3) at (4.7,-2.5)
	{$2\sA_2$};

\node (1) at (6.3,-3.5)
	{$2\sA_1$};

	\path[-]

(3) edge (1)

(6) edge (3)
(6) edge (4)
(7) edge (3)
(7) edge (4)

(12) edge (6)
(12) edge (9)
(14) edge (6)
(15) edge (6)
(15) edge (7)
(15) edge (9)
(16) edge (9)

(18) edge (7)

(22) edge (12)
(22) edge (15)
(22) edge (17)
(23) edge[bend right=10](9)
(24) edge (15)
(23) edge (14)
(23) edge (18)
(24) edge (16)
(24) edge (17)
(1) edge (4)
(7) edge (16)
(4) edge (9)
(9) edge (17)
(26) edge (17)
;
\end{tikzpicture}
\]
%\caption{Degeneration of singularities}
%\label{fig: degeneration}
%\end{figure}
}

\begin{rema}
\label{rema:contr}
In contrast to the equivariant case treated in \cite{CTZ}, specialization arguments do not work over $\bR$, and we do not know whether or not a very general cubic threefold with trivial cohomology specializing to one with nontrivial cohomology is rational over $\bR$.
They can be applied to show the existence of cubics
failing stable rationality 
over fields such as $\bR(t)$. 
\end{rema}

\section{Two Singular Points}
\label{sect:two}

Assume that $X_{\bC}$ has two complex-conjugated singular points, e.g., 
$$
p_1=[1:i:0:0:0],\quad\text{and}\quad p_2=[1:-i:0:0:0],
$$
so that $X\subset\bP^4_{x_1,\ldots,x_5}$ is given by 
\begin{align}\label{eqn:2sineq}
(x_1^2+x_2^2)x_3+x_1q_1+x_2q_2+f_3=0,
\end{align}
where $q_1,q_2\in \bR[x_4,x_5]$ are quadratic forms and 
$f_3\in \bR[x_3, x_4, x_5]$ is a cubic form. One  checks that $q_1$ and $q_2$ cannot both vanish if $X_{\bC}$ has $\sA_n$-singularities. If $q_1\not\equiv 0$ and $q_2\equiv 0$, then up to a change of variables,
%$$
%x_1=\frac{\sqrt2}{2}(x_1+x_2),\quad x_2=\frac{\sqrt2}{2}(x_1-x_2), x_3=x_3,\quad x_4=x_4,\quad x_5=x_5,
%$$
we have $q_1=q_2$. Thus, we may assume that none of $q_1$ or $q_2$ is $0.$

Each of $q_1$ and $q_2$ defines two points in $\bP^1_{x_4,x_5}(\bC)$, with one of the following possibilities: 
\begin{itemize}
\item one real point with multiplicity $2$, or
\item two real points, or 
\item two conjugate points. 
\end{itemize}
Combining the configuration patterns of four points defined by $q_1$ and $q_2$, we find all possibilities of the pair $(q_1, q_2)$, up to isomorphism:
{\small
$$
\begin{tabular}{|c|c|c|c|c|c|}
\hline
     $q_1$& \multicolumn{5}{c|}{$q_2$}\\\hline
      $x_4^2$& $\lambda x_4^2$&$x_5^2$&$x_4x_5$&$\lambda(x_4^2-x_5^2)$&$\lambda(x_4^2+x_5^2)$\\\hline
$ x_4x_5$&$\lambda x_4x_5$&$\lambda x_4(x_4-x_5)$&$\lambda(x_4^2-x_5^2)$&$\lambda(x_4^2+x_5^2)$&\\\hline
     $x_4^2+x_5^2$&$x_4^2+\lambda x_5^2$&\multicolumn{4}{c|}{$\lambda>0$ everywhere}\\\hline
\end{tabular}
$$
}

Introducing new coordinates 
$$
y_1=x_1x_3+\frac{q_1}{2},\quad \text{and}\quad y_2=x_2x_3+\frac{q_2}{2},
$$
and multiplying \eqref{eqn:2sineq} by $x_3$, we birationally transform $X$ over $\bR$ to 
\begin{align}\label{eqn:conicbundle2}
   Y= \{y_1^2+y_2^2+f_3x_3-\frac{q_1^2+q_2^2}{4}=0\}\subset\bP_{y_1, y_2, x_1,x_2,x_3} (2,2,1,1,1).
\end{align}
Note that $Y$ is a conic bundle over $\bP^2$, with 
$$
\pi: Y\to \bP^2_{x_1,x_2,x_3},\quad (y_1,y_2,x_1,x_2,x_3)\mapsto(x_1,x_2,x_3).
$$
The image $\pi(Y(\bR))$ is given by 
\begin{align}\label{eqn:conicim}
    \frac{q_1^2+q_2^2}{4}-f_3x_3\geq 0.
\end{align}
%, where $\bP(2,2,1,1,1)$ has coordinates $(y_1, y_2, x_1,x_2,x_3)$. 
In particular, we write
\begin{multline}\label{eqn:f3eqn}
f_3:=t_1x_3^3+x_3^2(t_2x_4+t_3x_5)+x_3(t_4x_4^2+t_5x_5^2+t_6x_4x_5)+\\
+t_7x_4^2x_5+t_8x_5^2x_4+t_9x_4^3+t_{10}x_5^3,
\end{multline}
   for  $t_1,\ldots,t_{10}\in\bR.$ 

   \subsection*{2$\sA_1$ and 2$\sA_2$-singularities}
   \begin{prop}\label{prop:2a12a2}
    Assume that $X$ has no real singular points and that $X_{\bC}$ has $2\sA_1$ or $2\sA_2$-singularities. Then $X$ is not rational over $\bR.$
   \end{prop}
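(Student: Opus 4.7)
The strategy is to pass to the $\bR$-birational conic bundle model $Y \to \bP^2_{x_3, x_4, x_5}$ of~\eqref{eqn:conicbundle2}, with fibers $y_1^2 + y_2^2 = h$, where $h = \tfrac{q_1^2 + q_2^2}{4} - f_3 x_3$. Since $X$ is $\bR$-birational to $Y$, it suffices to obstruct $\bR$-rationality of $Y$. This places us outside the scope of the two main obstructions used elsewhere in the paper: the cohomological obstruction is trivial here by Proposition~\ref{prop:h1sum}, and the topological obstruction may also fail (for many choices of parameters the set $\{h\geq 0\}\subset \bP^2(\bR)$ is connected, hence $X(\bR)$ is connected and stable rationality is not ruled out).

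The natural finer obstruction is the Brauer class of the generic conic. Over $K = \bR(x_3, x_4, x_5) = \bR(\bP^2)$, the generic fiber corresponds to the quaternion class $(-1, h) \in \Br(K)$. A sum of two squares in $K$ is non-negative at every real point at which it is defined, so the existence of a single point $p \in \bP^2(\bR)$ with $h(p) < 0$ forces $(-1, h) \neq 0$. Combining this non-triviality with birational rigidity of the conic-bundle Mori fiber space $Y \to \bP^2$ (in the spirit of Iskovskikh's results on standard conic bundles over rational surfaces), I would conclude that $Y$ is not $\bR$-rational, and therefore neither is $X$.

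The concrete step is then to exhibit such a real point $p$ for every normal form. Using the classification table of pairs $(q_1, q_2)$ together with the constraints on $f_3$ imposed by $X$ having exactly $2\sA_1$ or $2\sA_2$ singularities at $p_{1,2}$ — which translate into explicit constraints on $t_1,\ldots,t_{10}$ in~\eqref{eqn:f3eqn} — a natural first test is $p = [1:0:0] \in \bP^2_{x_3,x_4,x_5}$, where $h(p) = -t_1$. If $t_1 \ne 0$, replacing $p$ by $[-1:0:0]$ if necessary gives $h(p) < 0$. For those subcases (if any) where the singularity conditions force $t_1 = 0$, one evaluates at $[c:1:0]$ or $[c:0:1]$ for a suitably chosen $c \in \bR$, reading off negativity from the explicit $(q_1, q_2)$ and the remaining $t_i$'s.

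The main obstacle is the case-by-case verification across the several entries of the $(q_1, q_2)$-table combined with the branching between $2\sA_1$ and $2\sA_2$ conditions; one must ensure uniformly that the singularity constraints are compatible with $h$ attaining negative values on $\bP^2(\bR)$, rather than being forced into a sum-of-two-squares shape. A secondary subtlety is the appeal to birational rigidity: one must justify that any $\bR$-birational equivalence $Y \dashrightarrow \bP^3$ would factor through a rational section of the conic bundle $Y \to \bP^2$, a standard but non-trivial claim given that the discriminant $\{h=0\}$ is a singular quartic (two conjugate nodes in the $2\sA_1$ case, two conjugate cusps in the $2\sA_2$ case).
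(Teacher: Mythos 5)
Your first step---passing to the conic bundle $Y\to\bP^2$ of \eqref{eqn:conicbundle2} with quartic discriminant $\{h=0\}$---is exactly the paper's reduction. But the obstruction you then deploy does not work, and the gap is in the rigidity step. Conic bundles over $\bP^2$ with discriminant of degree $4$ are \emph{not} birationally rigid: the Iskovskikh--Sarkisov rigidity criterion requires $|4K_{\bP^2}+\Delta|\neq\emptyset$, i.e.\ $\deg\Delta\geq 12$, and indeed by Beauville--Shokurov every standard conic bundle over $\bP^2_{\bC}$ with $\deg\Delta\leq 4$ is rational over $\bC$. Consequently the class $(-1,h)\in\Br(\bR(\bP^2))$ of the generic fiber is not a birational invariant of the threefold $Y$: a birational map $Y\dashrightarrow\bP^3$ over $\bR$ need not respect the conic bundle structure, so the absence of a rational section (which your evaluation argument does correctly establish whenever $h<0$ somewhere on $\bP^2(\bR)$) says nothing about rationality of the total space. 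Without rigidity, the entire second half of your argument collapses; the case-by-case search for a point with $h(p)<0$ is then moot. A secondary factual slip: the discriminant quartic here is \emph{smooth} (for both the $2\sA_1$ and $2\sA_2$ configurations), not a quartic with two conjugate nodes or cusps.

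The paper's proof is a one-line citation, but the substance behind it is quite different from your route: for a conic bundle over $\bP^2_{\bR}$ with smooth quartic discriminant, the operative obstruction is the intermediate Jacobian (Prym) torsor obstruction of Benoist--Wittenberg \cite{BW}, packaged in \cite[Theorem 6.10]{KuzPro}; this is a codimension-$2$-cycle invariant, not a Brauer-group or Mori-fiber-space invariant, and it is what rules out rationality here even though $X(\bR)$ can be connected and $\rH^1(\Gamma,\Pic(\tilde X_{\bC}))=0$. If you want to salvage a Brauer-theoretic argument you would need an \emph{unramified} class on $Y$ itself, and $(-1,h)$ pulls back to zero in $\Br(\bR(Y))$ since the generic fiber acquires a point over $\bR(Y)$.
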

   
   \begin{proof}
   Recall that $X$ is birational to the conic bundle \eqref{eqn:conicbundle2}. When $X_\bC$ has $2\sA_1$ or $2\sA_2$-singularities, the conic bundle has a smooth quartic discriminant, with trivial double cover. By \cite{BW} or \cite[Theorem 6.10]{KuzPro}, $X$ is not rational over $\bR.$
   \end{proof}
   
 %  When $p_1$ and $p_2$ are $\sA_2$-points, the parameters satisfy
  %\begin{align}\label{eq:2a2cond}
   %    r_3=\frac{r_1r_2^2-r_1r_5^2+2r_2r_4r_5}{4(r_1^2+r_4^2)},\quad  r_6=\frac{2r_1r_2r_5-r_2^2r_4+r_4r_5^2}{4(r_1^2+r_4^2)}.
   %\end{align}

\subsection*{$2\sA_3$-singularities with no plane}
When $p_1$ and $p_2$ are $\sA_3$-points and $X_{\bC}$ contains no plane, one can check (cf. \cite[Section 5]{CMTZ}) that the only possibility for $q_1$ and $q_2$ from the table above is
$$
q_1=x_4x_5\quad \text{ and }\quad q_2=\frac{1}{2}(x_4^2-x_5^2).
$$
The $\sA_3$-singularities impose the following conditions on \eqref{eqn:f3eqn} 
$$
 t_7=t_{10},\quad t_8=t_9.
 $$
%The parameters of $f_3$ satisfy 
   %\begin{align*}\label{eq:2a3noplanecond}
 %  t_7=t_{10}\quad\text{ and }\quad t_8=t_9.
  % \end{align*}

\begin{rema}
   Consider the conic bundle $\pi: Y\to\bP^2$ birational to $X$ given by \eqref{eqn:conicbundle2}. Note that the real locus of the discriminant curve in $\bP^2$ is smooth. It follows that $Y(\bR)$ (and thus $X(\bR)$) is connected if and only if the image $\pi(Y(\bR))$ in $\bP^2(\bR)$ given by \eqref{eqn:conicbundle2} is connected.
   Using this, we present examples where $X(\bR)$ is disconnected or connected:
   \begin{itemize}
       \item   Disconnected: $
        t_1=-1, t_2=-10, t_3=-2, t_4=t_5=t_6=t_7=1, t_8=5,
   $
   \item Connected: $
        t_1=t_2=t_3=t_4=t_5=t_6=t_7=t_8=1.
   $
   \end{itemize}
  
  % On the other hand, when 
  %  $$
   %     t_1=t_2=t_3=t_4=t_5=t_6=t_7=t_8=1,
   %$$
   % $\pi(Y(\bR))$ is connected. It follows that $Y(\bR)$ and $X(\bR)$ are connected.\ZZ{?}
\end{rema}

\subsection*{$2\sA_3$-singularities with one plane}

Here, $X_{\bC}$ contains a unique plane $\Pi$, which is thus defined over $\bR$. This implies that $X$ is birational to a smooth intersection of two quadrics $X_{2,2}\subset \bP^5$, via unprojection from the plane, see \cite[Proposition 5]{CTZ}. Rationality of $X_{2,2}$ over $\bR$ is determined by the existence of a real line, see \cite{HT-quad}; on $X$ this translates into the existence of a real line disjoint from $\Pi$.

\begin{exam}
The cubic $X$ given by
    $$
(x_1^2+x_2^2)x_3+(x_1+x_2)x_4^2+(x_3+x_4+x_5)(x_3^2+x_4x_5)-2x_3x_5^2=0 
    $$
    is rational.
    Indeed, the line 
    $$
    \{x_1+x_2=x_1-x_5=x_3+x_4+x_5=0\}
    $$
    in $X$ is disjoint from the unique plane 
    $
    \{x_3=x_4=0\}\subset X
    $.
\end{exam}
Topological types of $X_{2,2}$ with real points but without real lines are listed in \cite[Section 11.4]{HT-quad}.

\subsection*{$2\sA_4$-singularities}
Cubic threefolds $X_{\bC}$ with $2\sA_4$-singularities do not contain a plane. As in the case of $2\sA_3$-singularities with no plane, we find that 
$$
q_1=x_4x_5,\quad q_2=\frac{1}{2}(x_4^2-x_5^2), \quad    t_7=t_{10},\quad t_8=t_9,
$$
together with the following conditions ensuring $2\sA_4$-singularities
$$
t_6=-8t_7t_8,\quad
t_4=t_5 + 4t_7^2 - 4t_8^2.
$$

\begin{exam}
  Similarly as before, $X(\bR)$ can be connected or disconnected depending on the parameters. We give examples: %For example, 
  %when 
 % $$
%t_1=2,\quad t_2=-10,\quad t_3=t_5=t_6=t_7=0,\quad t_4=-16,\quad t_8=-2,
%$$
%one can check that the image \eqref{eqn:conicim} is disconnected and it follows that $X(\bR)$  is disconnected. When 
%$$
%t_1=t_2=t_3=t_4=t_5=t_7=t_8=1,\quad t_6=-8,
%$$
%the image \eqref{eqn:conicim} is connected and $X(\bR)$ is connected.
\begin{itemize}
    \item Disconnected: $
t_1=2, t_2=-10, t_3=t_5=t_6=t_7=0, t_4=-16, t_8=-2.
$
\item Connected: $
t_1=t_2=t_3=t_4=t_5=t_7=t_8=1,\quad t_6=-8.
$
\end{itemize}
Again, this can be seen via checking the connectedness of \eqref{eqn:conicim}.
\end{exam}

\subsection*{$2\sA_5$-singularities} 
Such cubics $X$
are given by 
\begin{equation} \label{eqn:2a5eqn2}
(x_1^2 + x_2^2)x_3 + x_3^3 +\frac12x_2(x_4^2-x_5^2) + x_1x_4x_5 + bx_3(x_4^2+x_5^2)=0,\quad b\in\bR.
\end{equation}
Note that $X(\bR)$ is connected, see Example~\ref{exam:2A5-coho}. However $X$ is not stably rational:
\begin{prop}\label{prop:2a5h1}
Assume that $X$ has no real singular points and that $X_{\bC}$ has $2\sA_5$-singularities.
Then $X$ is not stably rational.
\end{prop}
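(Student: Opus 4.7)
The plan is to establish that $\rH^1(\Gamma, \Pic(\tilde X_{\bC})) = \bZ/2$, which by the standard birational invariance of this cohomology group rules out stable rationality over $\bR$. The computation of the lattice is already available in the complex setting; the task is to match the Galois action with a known $C_2$-action on the same lattice.

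First, I would pass to the minimal resolution $\tilde X \to X$. Each $\sA_5$ singularity $p_i$ is resolved by a chain $E_1^{(i)}+E_2^{(i)}+\cdots+E_5^{(i)}$ of $(-2)$-curves, and complex conjugation swaps the chain over $p_1=[1:i:0:0:0]$ with the chain over $p_2=[1:-i:0:0:0]$, sending $E_j^{(1)} \leftrightarrow E_j^{(2)}$. Next, I would enumerate the other generators of $\Pic(\tilde X_{\bC})$: the hyperplane class $H$, and the proper transforms of the distinguished divisors (planes through the singular line, lines on $X_\bC$ connecting the two nodes, etc.) that appear in the description of $\Pic(X_{\bC})$ for $2\sA_5$-cubics in \cite{CMTZ}. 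For the normal form \eqref{eqn:2a5eqn2}, I would exhibit these divisors explicitly — in particular, the two complex-conjugate planes cut out by equations of the form $x_3 = \alpha(x_4 \pm i x_5)$ (or similar, depending on $b$) — and verify they are swapped by $\Gamma$.

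Then I would identify the resulting $\Gamma$-lattice with the $C_2$-module of \cite[Proposition 5.12]{CMTZ}: the involution acts by swapping the two chains of exceptional curves, swapping the two distinguished plane classes, and fixing $H$. Under this identification, the computation of $\rH^1(C_2, \cdot)$ performed in that proposition transfers verbatim, giving $\bZ/2$. Concretely, one exhibits a class $D \in \Pic(\tilde X_{\bC})$ with $D + \sigma D = 0$ in $\Pic(\tilde X_{\bC})$ but with $D \neq E - \sigma E$ for any $E$; a natural candidate is the difference $\Pi_1 - \Pi_2$ of the two conjugate plane classes, suitably corrected by a combination of exceptional classes so that it is $\sigma$-anti-invariant yet not a coboundary.

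Finally, I would invoke the fact that $\rH^1(\Gamma, \Pic(\tilde X_{\bC}))$ is a stable birational invariant of smooth projective geometrically rational varieties over $\bR$ (applied to a smooth projective model of $X$), to conclude non-stable-rationality. The main obstacle is the explicit matching step: writing down generators of $\Pic(\tilde X_\bC)$ in terms of geometric divisors on $X$ defined over $\bR$, and verifying that the induced $\Gamma$-action coincides up to isomorphism of $C_2$-lattices with the one in \cite[Proposition 5.12]{CMTZ}. Everything else is a direct application of results already available in \cite{CMTZ}, \cite{CTZ}.
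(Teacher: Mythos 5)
Your overall strategy coincides with the paper's: exhibit the $\Gamma$-module structure of $\Pic(\tilde X_{\bC})$ for the real normal form, identify it with the $C_2$-lattice of \cite[Proposition 5.12]{CMTZ}, conclude $\rH^1(\Gamma,\Pic(\tilde X_{\bC}))=\bZ/2$, and invoke stable birational invariance. One concrete step, however, would fail as written: there are no planes on a cubic threefold with $2\sA_5$-singularities, so the "two complex-conjugate planes cut out by $x_3=\alpha(x_4\pm ix_5)$" do not exist. Indeed, by \cite[Section 5]{CMTZ} the class group $\Cl(X_{\bC})$ is generated by a hyperplane section $F$ and two conjugate divisors $S,\bar S$ with $S+\bar S=2F$; all three generators have degree $3$, so no class of degree $1$ can lie in $\Cl(X_{\bC})$. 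The correct distinguished divisors are the two degree-$3$ residual components of $\{q=0\}\cap X$ for an explicit real quadric $q$ (depending on $b$), which are swapped by complex conjugation; with $F$ fixed and $S\leftrightarrow\bar S$ subject to $S+\bar S=2F$, the class $S-F$ is anti-invariant but only $2(S-F)$ is a coboundary, giving $\rH^1=\bZ/2$ exactly as you intend. So your argument is salvaged verbatim once the "planes" are replaced by these quadric-residual surfaces; everything else matches the paper's proof.
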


\begin{proof}
We may assume that $X$ is given by \eqref{eqn:2a5eqn2}. The equation 
    $$
    q(x_1,\ldots,x_5)=x_3^2+\frac{2b-\sqrt{4b^2+2}}{4}x_4^2+\frac{2b-\sqrt{4b^2+2}}{4}x_5^2
    $$
    cuts out complex conjugated divisors on $X_{\bC}$, we have 
    $$
    \{q=0\}\cap X=S\cup\bar S,
    $$
    where
    \begin{multline*}
        S=\{q=x_3(x_1+ix_2)+\frac{i}{4}(x_4-ix_5)^2=x_1x_4+ix_2x_4-x_2x_5+\\+ix_1x_5+(\sqrt{4b^2+2}+2b)ix_3x_4+(\sqrt{4b^2+2}+2b)x_3x_5=0\}.
    \end{multline*}
    By \cite[Section 5]{CMTZ}, the class group $\Cl(X_\bC)$ is generated by the classes of $S, \bar S$ and the class of a hyperplane section $F$ subject to the relation $S+\bar S=2F$. As in \cite[Proposition 5.12]{CMTZ}, 
    $$
    \rH^1(\Gamma,\Pic(\tilde X_\bC))=\bZ/2,
    $$
    hence $X$ is not stably rational over $\bR$.
\end{proof}

Here we also record an explicit specialization pattern. See Remark~\ref{rema:contr} for a discussion of specializations over $\bR$.
\begin{exam}\label{exam:2A3spec}
 Consider the family of cubics $\cX\to\bA^{8}_{t_1,\ldots,t_8}$ given by 
    \begin{multline*}         (x_1^2+x_2^2)x_3+x_1x_4x_5+\frac12x_2(x_4^2-x_5^2)+t_1x_3^3+x_3^2(t_2x_4+t_3x_5)+\\+x_3(t_4x_4^2+t_5x_5^2+t_6x_4x_5)
+(x_4^2+x_5^2)(t_7x_5+t_8x_4)=0.
    \end{multline*}
    Let $\cX_1$ be the subfamily of $\cX$ consisting of fibers above the locus
    $$
    \{t_6+8t_7t_8=t_5-t_4+4t_7^2-4t_8^2=0\}\subset\bA^8,
    $$
 and $\cX_2$ be the subfamily of $\cX$ consisting of fibers above the line 
    $$
    \{t_1-1=t_2=t_3=t_5-t_4=t_6=t_7=t_8=0\}\subset\bA^8.
    $$
    We have a natural inclusion $\cX\supset\cX_1\supset\cX_2$. Then a very general fiber of $\cX$ is a real cubic with  $2\sA_3$-singularities, that of $\cX_1$ has $2\sA_4$-singularities, and that of $\cX_2$ has $2\sA_5$-singularities, over $\bC$.
\end{exam}

\subsection*{$2\sD_4$-singularities}
As in the proof of \cite[Proposition 5.15]{CMTZ}, we find that such $X$ are given by 
\begin{multline} \label{eqn:2d4eq}
(x_1^2 + x_2^2)x_3 +t_1x_3^3+x_3^2(t_2x_4+t_3x_5)+\\
+x_3(t_4x_4^2+t_5x_5^2+t_6x_4x_5)
+x_4q(x_4,x_5)=0,
\end{multline}
for general parameters $t_1,\ldots,t_6\in\bR$, and 
$$
q(x_4,x_5)=x_4^2-x_5^2\quad\text{ or } \quad x_4^2+x_5^2.
$$
In the first case, $X$ contains three real planes
$$
\Pi_1=\{x_3=x_4=0\},  \Pi_2=\{x_3=x_4+x_5=0\}, \Pi_3=\{x_3=x_4-x_5=0\}.
$$
In the second case, $X$ contains only one real plane $\Pi_1$. In both cases, projection from $\Pi_1$ induces a 
quadric surface bundle
$$
\pi:X\dashrightarrow \bP^1_{x_3,x_4}.
$$
Note that the discriminant curve of the conic bundle structure \eqref{eqn:conicbundle2} in this case has real singularities. It is therefore more appropriate to use the quadric surface bundle structure to study the topology of $X(\bR).$ We have 
$$
\# \text{ components  of } X(\bR) \,=\, \# \text{ components  of } \pi(X(\bR)).
$$

First, we consider the case $q=x_4^2-x_5^2$. 
There are three possibilities:
\begin{enumerate} 
    \item $X$ contains a line disjoint from a plane, then $X$ is rational, 
    \item 
    $X(\bR)$ has two components, and $X$ is not stably rational, 
    \item $X(\bR)$ is connected but $X$ does not contain any line disjoint from one of the planes $\Pi_1$, $\Pi_2$, $\Pi_3$. 
\end{enumerate}
In the third case, we do not know whether or not $X$ is rational. Each possibility is realized, e.g., by
  \begin{enumerate}
      \item $(x_1^2+x_2^2)x_3-x_3^3+x_4(x_4^2-x_5^2)$,
      \item $(x_1^2+x_2^2)x_3+300x_3^3+x_3^2(35x_4+24\sqrt{21}x_5)+10x_3x_5^2+x_4(x_4^2-x_5^2)$,
      \item $(x_1^2+x_2^2)x_3+x_3^3+x_4(x_4^2-x_5^2)$.
  \end{enumerate}
Let us explain when these possibilities occur, using $\pi$. Alternatively, we can use projection from $\Pi_2$ or $\Pi_3$, but this is essentially the same, up to a change of coordinates. Put 
\begin{multline*}
    \Delta(x_3,x_4):=-x_4^4+ ( t_5-t_4)x_3x_4^3 + ( t_4t_5 -t_2- \frac14t_6^2)x_3^2x_4^2+\\+ ( t_2t_5-t_1 - \frac12t_3t_6)x_3^3x_4 +(t_1t_5 - \frac14t_3^2)x_3^4.
\end{multline*}
Then $\Delta(x_3,x_4)\cdot x_3^2$ is the discriminant of $\pi$. Set $D_1(x)=\Delta(1,x)$. Note that $D_1(x)$ can only have simple real roots since $X$ has only three planes and no real singular points. Every line in $X$ intersects exactly one plane among $\Pi_1, \Pi_2,\Pi_3$,
since $\Pi_1+\Pi_2+\Pi_3$ is cut out on $X$ by the hyperplane $x_3=0$.

\begin{lemm}
The cubic $X$ contains a line disjoint from $\Pi_2$ and $\Pi_3$ if and only if  
$t_3=-t_5t_6$ and $t_1+t_2t_5+t_4t_5^2+t_5^3<0$, or
$D_1$ has real roots and $t_5$ is smaller than its largest root.
\end{lemm}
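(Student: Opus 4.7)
The plan is to use the quadric surface bundle $\pi:X\dashrightarrow\bP^1_{x_3,x_4}$ obtained by projecting from $\Pi_1$ and analyze which fibers admit real lines that avoid $\Pi_2$ and $\Pi_3$. Since $X\cap\{x_3=0\}=\Pi_1\cup\Pi_2\cup\Pi_3$, any line $\ell\subset X$ either lies in one of these three planes or meets $\{x_3=0\}$ at a single point; lines contained in $\Pi_1\simeq\bP^2$ always intersect $\Pi_1\cap\Pi_2\cap\Pi_3=\{x_3=x_4=x_5=0\}$ by Bezout in $\bP^2$, and so the desired $\ell$ must meet $\Pi_1$ at a point outside this triple intersection, and hence lie in a quadric fiber $Q_c$ of $\pi$ inside the hyperplane $H_c:=\{x_4=cx_3\}\simeq\bP^3$. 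The problem reduces to: when does some $Q_c$ carry a real line avoiding the line $L:=\Pi_2\cap H_c=\Pi_3\cap H_c=\{x_3=x_5=0\}$?

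Substituting $x_4=cx_3$ into~\eqref{eqn:2d4eq} with $q=x_4^2-x_5^2$ and factoring off the $\Pi_1$-component $x_3$, one obtains
$$
Q_c:\ x_1^2+x_2^2+\alpha(c)x_3^2+\beta(c)x_3x_5+(t_5-c)x_5^2=0,
$$
with $\alpha(c):=t_1+t_2c+t_4c^2+c^3$ and $\beta(c):=t_3+t_6c$. The matrix of $Q_c$ is block-diagonal with blocks $I_2$ and $B(c):=\Bigl(\begin{smallmatrix}\alpha(c)&\beta(c)/2\\\beta(c)/2&t_5-c\end{smallmatrix}\Bigr)$, and $\det Q_c=\det B(c)=D_1(c)$. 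A clean specialization yields
$$
D_1(t_5)=-\tfrac14(t_3+t_5t_6)^2,
$$
so $t_5$ is a root of $D_1$ iff $t_3=-t_5t_6$; this is the algebraic bridge between the two clauses of the lemma.

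The heart of the argument is a signature analysis. A smooth quadric in $\bP^3_\bR$ has real lines iff it has signature $(2,2)$, and a rank-$3$ cone has real rulings iff it has signature $(2,1)$; since $I_2$ contributes $(2,0)$, both cases require $B(c)$ to be negative definite, respectively rank $1$ with a negative eigenvalue. For smooth fibers ($D_1(c)\neq0$), Sylvester's criterion makes $B(c)$ negative definite equivalent to $D_1(c)>0$ and $\alpha(c)<0$; using the identity $\alpha(c)(t_5-c)=D_1(c)+\beta(c)^2/4>0$, this simplifies to $D_1(c)>0$ and $c>t_5$. For cones ($D_1(c)=0$), the relation $\alpha(c)(t_5-c)=\beta(c)^2/4\geq0$ forces $\alpha(c)\geq0$ when $c<t_5$ (complex cone), $\alpha(c)\leq0$ when $c>t_5$ (automatically real cone), and $\beta(t_5)=0$ when $c=t_5$, with the cone there real iff $\alpha(t_5)<0$. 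Collecting: real lines in some fiber exist iff either $t_3=-t_5t_6$ together with $t_1+t_2t_5+t_4t_5^2+t_5^3<0$ (cone at $t_5$), or $D_1$ is positive at some $c>t_5$. Since $D_1(c)\to-\infty$ and the real roots of $D_1$ are simple (by the paper's preceding remark), the second condition is equivalent to $D_1$ having a real root greater than $t_5$; a real cone at some $c_0>t_5$ is subsumed here, as $c_0$ itself is then such a root.

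Finally, one checks that the real lines found are genuinely disjoint from $\Pi_2$ and $\Pi_3$, not merely from $\Pi_1$'s complement. Since $\ell\subset H_c$, it suffices to verify $\ell\cap L=\emptyset$: the intersection $Q_c\cap L$ is cut out by $x_3=x_5=0$ and $x_1^2+x_2^2=0$ and so has no real points, while every real line $\ell\subset Q_c$ meets $\Pi_1\cap H_c=\{x_3=0\}$ at a real point of the conic $\{x_3=0,\,x_1^2+x_2^2+(t_5-c)x_5^2=0\}$, which necessarily has $x_5\neq0$ and hence lies outside $L$. The main obstacle is organizing the signature casework cleanly; once the identity $D_1(t_5)=-\tfrac14(t_3+t_5t_6)^2$ is in hand, the cone-at-$t_5$ and largest-real-root conditions of the statement emerge naturally.
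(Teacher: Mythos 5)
Your proof is correct and takes essentially the same route as the paper: reduce to real lines in the quadric fibers of the projection from $\Pi_1$ and classify each fiber by the sign of $D_1$ and the position of $c$ relative to $t_5$. Your packaging via the block $B(c)$ and the identity $D_1(t_5)=-\tfrac14(t_3+t_5t_6)^2$ is a slightly tidier organization of the same case analysis the paper carries out by explicit diagonalization of the fibers.
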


\begin{proof}
Observe that $X$ contains a line disjoint from $\Pi_2$ and $\Pi_3$ if and only if this line is contained in a fiber of $\pi$ over $(x_3,x_4)\ne(0,1)\in \bP^1$.
Thus, we may set $x_3=1$.
We have to understand when the real locus of the quadric $\pi^{-1}(x_4)$ contains a real line. If $x_4=t_5$, the real locus of $\pi^{-1}(t_5)$ is isomorphic to the quadric in $\bP^3_{u_1,u_2,u_3,u_4}$ given by 
$$
u_1^2+u_2^2+(t_1 + t_2t_5 + t_4t_5^2 + t_5^3)u_3^2-\frac{(t_3+t_5t_6)^2}{4(t_1 + t_2t_5 + t_4t_5^2 + t_5^3)}u_4^2=0.
$$
If $t_3=-t_5t_6$ and $t_1 + t_2t_5 + t_4t_5^2 + t_5^3<0$, the real locus of this quadric is a cone, so the fiber $\pi^{-1}(t_5)$ contains a line that is disjoint from $\Pi_2$ and $\Pi_3$.
Similarly, if $t_3=-t_5t_6$ and $t_1 + t_2t_5 + t_4t_5^2 + t_5^3>0$,
the real locus of this quadric is just a point.
In all other cases, the real locus is a sphere, so it does not contain lines either.

When $x_4\ne t_5$, the fiber $\pi^{-1}(t_5)$ is isomorphic to the quadric surface 
\begin{align}\label{eqn:diagqua2d4}
    S=\{u_1^2+u_2^2+(t_5-x_4)u_3^2+\frac{D_1(x_4)}{t_5-x_4}u_4^2=0\}\subset \bP^3_{u_1,u_2,u_3,u_4},
\end{align}
and we may have the following possibilities:
\begin{itemize}
    \item $D_1(x_4)<0$, $S(\bR)$ a sphere;
    \item $D_1(x_4)>0$ and $x_4>t_5$, $S(\bR)$ is a hyperboloid, so the fiber contains a line disjoint from $\Pi_2$ and $\Pi_3$;
     \item $D_1(x_4)> 0$ and $x_4<t_5$, $S(\bR)$ is empty; 
      \item $D_1(x_4)= 0$ and $x_4<t_5$, $S(\bR)$ is a point; 
   \item $D_1(x_4)=0$ and $x_4>t_5$, $S(\bR)$ is a quadric cone, so the fiber contains a line disjoint from $\Pi_2$ and $\Pi_3$.
\end{itemize}
This implies the required assertion.
\end{proof}

Similarly, we obtain:

\begin{lemm}
The locus $X(\bR)$ has two connected components if and only if  $D_1$ has 4 real roots and $t_5$ is greater than these roots.
\end{lemm}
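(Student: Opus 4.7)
My plan is to exploit the identity
$$\#\text{ components of } X(\bR) = \#\text{ components of } \pi(X(\bR))$$
noted above, and to describe $\pi(X(\bR))\subset \bP^1(\bR)$ by assembling the fiber-by-fiber information produced in the proof of the preceding lemma. First I will read off from the case analysis following \eqref{eqn:diagqua2d4} (together with the separate treatment at $x_4=t_5$) that $\pi^{-1}(x_4)$ has a real point except precisely when $x_4<t_5$ and $D_1(x_4)>0$. I also check from \eqref{eqn:2d4eq} that the fiber over $[0:1]\in\bP^1(\bR)$ is nonempty, as it contains real points of $(\Pi_2\cup\Pi_3)\setminus\Pi_1$. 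Therefore, on the affine chart $x_3=1$ the complement of $\pi(X(\bR))$ is the bounded open set
$$U=\{x_4\in\bR: x_4<t_5 \text{ and } D_1(x_4)>0\},$$
and on $\bP^1(\bR)\cong S^1$ one has $\pi(X(\bR))=\bP^1(\bR)\setminus U$.

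Next I will compute directly from the formula for $\Delta$ that
$$D_1(t_5)=-\tfrac14(t_3+t_5 t_6)^2\le 0.$$
Since $X$ has no real singular points, the real roots of the quartic $D_1$ (which has leading coefficient $-1$) are simple, so $D_1$ has $0$, $2$, or $4$ real roots, and $\{D_1>0\}$ is a disjoint union of $0$, $1$, or $2$ bounded open intervals whose endpoints are consecutive real roots of $D_1$. The critical consequence of the inequality $D_1(t_5)\le 0$ is that $t_5$ never lies inside one of these open intervals.

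It then remains to enumerate cases. If $D_1$ has $0$ or $2$ real roots, or if $D_1$ has $4$ real roots $r_1<r_2<r_3<r_4$ with $t_5\le r_1$ or $r_2\le t_5\le r_3$, then $U$ is either empty or a single open interval, so $\bP^1(\bR)\setminus U$ is connected. Only when $D_1$ has $4$ real roots and $t_5\ge r_4$ does $U=(r_1,r_2)\cup(r_3,r_4)$ consist of two disjoint open intervals, in which case $\bP^1(\bR)\setminus U$, hence $X(\bR)$, has exactly two components.

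The main technical obstacle is establishing the identity $D_1(t_5)=-(t_3+t_5 t_6)^2/4$. It is this algebraic relation among the coefficients of $X$ that rules out the configurations where $U$ would be a single interval straddling one of the roots of $D_1$, and thereby forces the dichotomy between $1$ component and $2$ components of $X(\bR)$ to be governed by the simple combinatorial condition in the statement.
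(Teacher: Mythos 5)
Your proof is correct and follows exactly the route the paper intends: the paper gives no separate argument for this lemma (it says ``Similarly, we obtain'', deferring to the fiber-by-fiber analysis of the preceding one), and your reconstruction — the image $\pi(X(\bR))\subset\bP^1(\bR)$ omits precisely $\{x_4<t_5,\ D_1(x_4)>0\}$, so one counts complementary arcs on $S^1$ — is that argument made explicit. The identity $D_1(t_5)=-\tfrac14(t_3+t_5t_6)^2\le 0$, which you verify directly and which rules out $t_5$ lying inside a component of $\{D_1>0\}$, is indeed the one detail the paper leaves implicit (it is the determinant of the special fiber over $x_4=t_5$, whose nonemptiness the first lemma's proof checks by hand).
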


\  

Now, we consider the case $q=x_4^2+x_5^2$. 
In this case $X$ contains only one real plane, 
$X$ does not contain real lines disjoint this plane,
and the fiber of $\pi$ over $x_3=0$ is empty. As before, we have two possibilities: 
\begin{enumerate}
    \item $X(\bR)$ is disconnected, so $X$ is not stably rational over $\bR$,
    \item $X(\bR)$ is connected.
\end{enumerate}
In the second case, we do not know whether or not $X$ is rational. Both possibilities are realized:
\begin{enumerate}
    \item $X=\{(x_1^2+x_2^2)x_3+x_3^3+x_4(x_4^2+x_5^2)=0\}$,
    \item $X=\{(x_1^2+x_2^2)x_3+x_3^3-x_3^2x_4+2x_3(x_5^2-x_4^2)+x_4(x_4^2+x_5^2)=0\}$.
\end{enumerate}
To explain these cases in terms of the defining equation \eqref{eqn:2d4eq}, set 
\begin{multline*}
   D_2(x):=x^4 + (t_4 + t_5)x^3 + (t_2 + t_4t_5 - \frac14t_6^2)x^2+\\ + (t_1 + t_2t_5 - \frac12t_3t_6)x + t_1t_5 - \frac14t_3^2.
\end{multline*}
We have:

\begin{lemm}\label{lemm:topo2d41pl}
The locus $X(\bR)$ is connected if and only if one of the following holds:
\begin{itemize}
 \item $D_2$ has no real root, 
  \item $D_2$ has 2 real roots $\alpha_1<\alpha_2$ and $\alpha_1\leq-t_5$,
   \item $D_2$ has 4 real roots $\alpha_1<\alpha_2<\alpha_3<\alpha_4$ and $\alpha_3\leq-t_5$.
\end{itemize}

     %  \begin{itemize}
      %  \item $D_2$ has no real root, 
      %   \item $D_2$ has 1 real root $r_1$ and $r_1\leq-t_5$, 
    %    \item $D_2$ has 2 real roots $r_1<r_2$ and
      %  \begin{itemize} 
      %  \item[$\circ$]  both 
       % have multiplicity 1, and $r_2\leq-t_5$, or 
        %  \item[$\circ$]  
         % at least one has multiplicity $>1$ and $r_1\leq-t_5$,
          %\end{itemize}
     %\item $D_2$ has 3 real roots $r_1<r_2<r_3$ where
     %\begin{itemize} 
    % \item[$\circ$] 
     %$r_3$ has multiplicity $2$ and $r_3\leq-t_5$, 
    % or 
     %\item[$\circ$] 
     %$r_1$ has multiplicity $2$ and $r_2\leq-t_5$, or 
    %\item[$\circ$] 
    %$r_2$ has multiplicity $2$ and $r_1\leq-t_5$, 
 %\end{itemize}
                % \item $D_2$ has 4 real roots $r_1<r_2<r_3<r_4$ and $r_3\leq-t_5$.
       %\end{itemize}
    %\item $X$ has three real components, and thus is not stably rational,  if $D_2(x)$ has four real roots and $-t_5$ is smaller than the smallest root of $D_2(x)$. 
%   \item $X(\bR)$ is not connected, and $X$ is not stably rational. 
\end{lemm}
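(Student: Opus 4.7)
The plan is to count the connected components of $X(\bR)$ by computing the image $\pi(X(\bR))\subset\bP^1_{x_3,x_4}(\bR)=S^1$ of the quadric surface bundle $\pi$ obtained by projection from the unique real plane $\Pi_1=\{x_3=x_4=0\}$, using the principle recorded earlier that the number of components of $X(\bR)$ equals the number of components of $\pi(X(\bR))$.

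First I would describe the fibers of $\pi$ explicitly. Substituting $x_4=\lambda x_3$ into \eqref{eqn:2d4eq} (with $q=x_4^2+x_5^2$) and factoring out $x_3$, the residual quadric is
\[
Q_\lambda:\ x_1^2+x_2^2+P(\lambda)\,x_3^2+R(\lambda)\,x_3x_5+(t_5+\lambda)\,x_5^2=0,
\]
with $P(\lambda):=t_1+t_2\lambda+t_4\lambda^2+\lambda^3$ and $R(\lambda):=t_3+t_6\lambda$. A direct expansion identifies $D_2(\lambda)=P(\lambda)(t_5+\lambda)-R(\lambda)^2/4$ as the determinant of the $(x_3,x_5)$-block of $Q_\lambda$. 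For $t_5+\lambda\ne 0$, completing the square in $(x_3,x_5)$ brings $Q_\lambda$ to diagonal form with entries $(1,1,\,D_2(\lambda)/(t_5+\lambda),\,t_5+\lambda)$; hence $Q_\lambda(\bR)=\emptyset$ iff all four entries are positive, i.e.\ iff $t_5+\lambda>0$ and $D_2(\lambda)>0$. At the borderline $\lambda=-t_5$ a direct check of $Q_{-t_5}$ shows the fiber is always nonempty, consistent with $D_2(-t_5)=-R(-t_5)^2/4\le 0$.

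Second, I would determine the image of $\Pi_1(\bR)$ under the morphism obtained by resolving $\pi$ via the blow-up of $\bP^4$ along $\Pi_1$. Writing $F=x_3A+x_4B$ one reads off $A|_{\Pi_1}=x_1^2+x_2^2+t_5x_5^2$ and $B|_{\Pi_1}=x_5^2$, so the extended map sends $p=[x_1:x_2:0:0:x_5]\in\Pi_1$ to $[x_5^2:-(x_1^2+x_2^2+t_5x_5^2)]$. Over $\bR$, this image sweeps the closed arc $\{\lambda\le -t_5\}\cup\{\infty\}\subset S^1$. Combining the two steps,
\[
\pi(X(\bR))=\{\lambda\le -t_5\}\cup\{\lambda\in\bR:D_2(\lambda)\le 0\}\cup\{\infty\}\subset S^1.
\]

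The remaining step is an enumeration of the roots of $D_2$. Since $D_2$ is monic of degree $4$, the set $\{D_2\le 0\}\subset\bR$ is a union of zero, one, or two closed intervals according as $D_2$ has $0$, $2$, or $4$ real roots (with multiplicity). The arc $\{\lambda\le -t_5\}\cup\{\infty\}$ always forms one component of $\pi(X(\bR))$, containing $\pi(\Pi_1(\bR))$; a maximal interval $[\alpha_i,\alpha_{i+1}]$ of $\{D_2\le 0\}$ fuses into this arc iff $\alpha_i\le -t_5$, and otherwise contributes a separate component. A brief enumeration of the three cases yields exactly the conditions in the statement. The main subtlety arises when $-t_5$ coincides with a root $\alpha_i$: the adjoining subinterval $(-t_5,\alpha_{i+1}]$ must still be counted as fused with the left arc, which follows from $D_2(-t_5)\le 0$ together with the continuous variation of $Q_\lambda(\bR)$ across $\lambda=-t_5$ verified in step one.
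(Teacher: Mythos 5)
Your proposal is correct and follows essentially the same route as the paper: project from $\Pi_1$, diagonalize the residual quadric $Q_\lambda$ to get the signature data $(1,1,D_2(\lambda)/(t_5+\lambda),t_5+\lambda)$, conclude that a fiber is empty exactly when $D_2(\lambda)>0$ and $\lambda>-t_5$, and then enumerate how the intervals $\{D_2\le 0\}$ attach to the arc $\{\lambda\le -t_5\}$. Your extra step tracking $\pi(\Pi_1(\bR))$ and the fiber over $x_3=0$ is a welcome refinement (the paper's assertion that this fiber is empty is imprecise --- its real locus is a line in the exceptional divisor --- though this does not affect the connectivity count), and your explicit check that $D_2(-t_5)=-\tfrac14(t_3-t_5t_6)^2\le 0$ cleanly handles the borderline case $\alpha_i=-t_5$.
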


\begin{proof}
%   First we show there is no real line in $X$ disjoint from $\Pi_1$. Indeed, any such line $l$ will intersect the hyperplane section $\{x_3=0\}\cap X$ at only one point. So we may assume it intersects the complex plane $\Pi_2'=\{x_3=x_4+ix_5=0\}$ at one real point. But then the point lies on the line $\{x_3=x_4=x_5=0\}$ contained in $\Pi_1,$ which is a contradiction.
  Recall that $X(\bR)$ is connected if and only if $\pi(X(\bR))$ is. Set $x_3=1$. When $x_4=-t_5$, the real locus of $\pi^{-1}(x_4)$ is isomorphic to the quadric in $\bP^3_{u_1,u_2,u_3,u_4}$ given by 
 $$
u_1^2+u_2^2+(t_1 - t_2t_5 + t_4t_5^2 - t_5^3)u_3^2-\frac{(t_3-t_5t_6)^2}{4(t_1 - t_2t_5 + t_4t_5^2 - t_5^3)}u_4^2=0.
$$
One can check that the real locus of this quadric is always nonempty. 
When $x_4\ne-t_5$, a general fiber of $\pi$ is isomorphic to the quadric surface in $\bP^3_{u_1,u_2,u_3,u_4}$
$$
S=\{u_1^2+u_2^2+(x_4+t_5)u_3^2+\frac{D_2(x_4)}{x_4+t_5}u_4^2=0\}
$$
and we may have the following possibilities:

\begin{itemize}
  % \item when $x_4=-t_5$, the fiber is a sphere, 
    \item when $D_2(x_4)<0$, $S(\bR)$ is a sphere,
    \item when $D_2(x_4)>0$ and $x_4<-t_5$, $S(\bR)$ is a hyperboloid, 
     \item when $D_2(x_4)> 0$ and $x_4>-t_5$, $S(\bR)$ is empty, 
   \item when $D_2(x_4)=0$ and $x_4<-t_5$, $S(\bR)$ is a quadric cone,
      \item when $D_2(x_4)=0$ and $x_4>-t_5$, $S(\bR)$ is a point.
\end{itemize}
From this, one obtains the cases when $X(\bR)$ is connected, based on roots of $D_2$, as is stated. 
%The image $\pi(X)$ consists of $x_4\in\bR$ such that $D_2(x_4)<0$ or $D_2(x_4)\geq 0$ with $x_4\leq -t_5.$ 
%From this description, $\pi(X)$ may have up to three real components. We exclude the case when $X$ has three components: this happens if and only if $D_2(x)$ has four real roots and $-t_5$ is smaller than the smallest root of $D_2(x)$. One may check that this algebraic condition on the quartic polynomial $D_2(x)$ is impossible. So $\pi(X)$ has one or two components, as is described in the assertion. 
\end{proof}

%Conjecture/Goal: When $X$ has one real plane, $X$ is irrational over $\bR$. When $X$ has three real planes, $X$ is rational if and only if $X(\bR)$ is connected (iff $X$ contains a real line disjoint from one curve). 

%\begin{exam}
   % Consider the cubic threefold $X_{t_1}$ given by 
    %\begin{align*}
       %    X_{t_1}=\{(x_1^2 + x_2^2)x_3 +t_1x_3^3+x_4(x_4^2-x_5^2)=0\}\subset\bP^4,\quad t_1\in \bR^\times.
  %  \end{align*}
%Then when $t_1<0$, $X$ is connected and rational over $\bR$: there exists a line 
%$$
%\{x_1-x_2=\sqrt2x_2-\sqrt{t_1}x_3=x_4-x_5=0\}\subset X
%$$
%disjoint from the plane 
%$$
%\Pi=\{x_3=x_4=0\}\subset X.
%$$
%When $t_1>0$, $X(\bR)$ is disconnected. In particular, projection from $\Pi$ turns $X$ into a quadric surface bundle
%$
%\pi: X\dashrightarrow\bP^1.
%$
%And the image $\pi(X)$ is disconnected in $\bP^1(\bR)$ if $t_1>0.$
%\end{exam}

\section{Four singular points}

When four singular points are not in general position, by \cite{CTZ}, all of them are $4\sA_1$-singularities, contained in a plane $\Pi\subset X$.

\begin{prop}
    Assume that $X$ has no real singular points and that $X_\bC$ has $4\sA_1$-singularities in a plane $\Pi\subset X$ . Then $X$ is rational if and only if $X$ contains a  real line disjoint from $\Pi.$
\end{prop}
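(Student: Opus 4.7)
The approach will mirror the treatment of $2\sA_3$-singularities with a plane earlier in this section.

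First, I would observe that $\Pi$ is automatically defined over $\bR$: the four $\sA_1$-singularities form the entire (Galois-stable) singular locus of $X_\bC$, and they are in linearly general position inside $\Pi$, so $\Pi$ is the unique plane through them and is therefore Galois-invariant. Hence the construction below can be carried out over $\bR$.

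Next, I would apply the unprojection of $\Pi$, as in \cite[Proposition 5]{CTZ}. Writing $\Pi=V(\ell_1,\ell_2)$ with real linear forms, the cubic takes the form
$$
X=\{\ell_1Q_1+\ell_2Q_2=0\}\subset\bP^4,
$$
for real quadrics $Q_1,Q_2$, and the four nodes are precisely the base points of the pencil of conics $\langle Q_1|_\Pi,Q_2|_\Pi\rangle$ on $\Pi$. The unprojection produces a birational map
$$
\phi: X\dashrightarrow X_{2,2}\subset\bP^5
$$
over $\bR$, onto a smooth intersection of two quadrics; smoothness here uses the hypothesis that the four nodes are in linearly general position in $\Pi$. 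By \cite{HT-quad}, such an $X_{2,2}$ (which automatically has a real point, since $X$ does) is rational over $\bR$ if and only if it contains a real line.

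It remains to match lines. I would argue that $\phi^{-1}$ contracts a single divisor onto $\Pi$, so a real line on $X_{2,2}$ lifts to a real line on $X$ disjoint from $\Pi$, and conversely a real line on $X$ disjoint from $\Pi$ maps birationally onto a real line of $X_{2,2}$. The main (mild) obstacle is making this line correspondence precise, which I expect to follow from an explicit analysis of the indeterminacy and exceptional loci of $\phi$ and $\phi^{-1}$ in terms of $(\ell_1,\ell_2,Q_1,Q_2)$. Combined with \cite{HT-quad}, this yields the claimed equivalence.
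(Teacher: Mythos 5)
Your proposal is correct and follows essentially the same route as the paper: unproject from the (necessarily real) plane $\Pi$ to obtain a smooth intersection of two quadrics $X_{2,2}\subset\bP^5$, invoke \cite{HT-quad} to reduce rationality to the existence of a real line on $X_{2,2}$, and translate that back to a real line on $X$ disjoint from $\Pi$. The extra details you supply (why $\Pi$ is Galois-invariant, the shape $\ell_1Q_1+\ell_2Q_2$ of the equation, and the line correspondence under the unprojection) are all consistent with the paper's argument, which states these steps without elaboration.
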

\begin{proof}
    Unprojecting from $\Pi$, $X$ is birational to a smooth intersection of two quadrics  $X_{2,2}$ in $\bP^5$, 
    which is rational if and only if it contains a real line, by \cite{HT-quad}. The existence of a real line in $X_{2,2}$ is equivalent to the existence of a real line in $X$ disjoint from $\Pi.$ 
\end{proof}

\begin{exam}
The cubic $X$ given by
    $$
    (x_1^2+x_2^2+x_3^2)x_4+
    x_2(x_3x_5+x_4^2)-x_5(x_4^2+3x_4x_5+x_5^2)=0
    $$
    is rational. Indeed, 
    the line 
    $$
    \{x_1-x_2=x_1-x_3=x_1-x_5=0\}
    $$
    in $X$ is disjoint from the unique plane 
    $
    \{x_4=x_5=0\}\subset X
    $.
\end{exam}

For the rest of the section, we assume that the singular points of $X_{\bC}$ are in general position and have coordinates
$$
p_1=[1:i:0:0:0], \quad p_2=[1:-i:0:0:0],
$$
$$
p_3=[0:0:1:i:0],\quad p_4=[0:0:1:-i:0].
$$
Then the defining equation of $X$ is 
\begin{multline}\label{eqn:4singnormal}
    (r_1x_1+r_2x_2)(x_3^2+x_4^2)+ (r_3x_3+r_4x_4)(x_1^2+x_2^2)+\\+ax_5^3+x_5^2(b_1x_1+b_2x_2+b_3x_3+b_4x_4)+\\+
    x_5(t_1x_1x_3+t_2x_1x_4+t_3x_2x_3+t_4x_2x_4+t_5(x_1^2+x_2^2)+t_6(x_3^2+x_4^2)),
\end{multline}
with parameters $r_1,\ldots,r_4,b_1,\ldots,b_4,t_1,\ldots,t_6,a\in\bR.$

\subsection*{$4\sA_1$, $2\sA_2+2\sA_1$, $4\sA_2$-singularities} 
In these cases, at least one of the $r_1$ and $r_2$, and one of the $r_3$ and $r_4$ is nonzero. After changing variables, 
$$
r_1=r_2=r_3=r_4=1, \quad t_1=t_4, \quad t_2=t_3,\quad t_5=t_6=0.
$$
It follows that $X$ is given by
\begin{multline}\label{eqn:4a1normal}
    (x_1+x_2)(x_3^2+x_4^2)+(x_3+x_4)(x_1^2+x_2^2)+ax_5^3+\\+x_5^2(b_1x_1+b_2x_2+b_3x_3+b_4x_4)+\\+
x_5(t_1(x_1x_3+x_2x_4)+t_2(x_1x_4+x_2x_3))=0,
\end{multline}
with parameters $a,b_1,b_2,b_3,b_4,t_1,t_2\in\bR.$ The singularity types of $X$ are determined by the parameters as follows
\begin{enumerate}
    \item $4\sA_1$: general $a,b_1,b_2,b_3,b_4,t_1,t_2\in\bR$,
    \item $2\sA_2+2\sA_1$: $b_1=b_2=-\frac{(t_1-t_2)^2}{8}$,
    \item $4\sA_2$: $b_1=b_2=b_3=b_4-\frac{(t_1-t_2)^2}{8}$.
\end{enumerate}
Cubics in the second and third cases are in fact birational to cubics with 
$4\sA_1$-singularities. 
Common birational models for all three types are forms of smooth divisors in $(\bP^1)^4$ of degree $(1,1,1,1)$, see \cite[Section 5]{CTZ}. We describe these models over $\bR$.

Let $\pi\colon\widetilde{X}\to X$ be the blowup of the singularities of $X$.
Then there exists a commutative diagram:
$$
\xymatrix{
\widetilde{X}\ar@{-->}[rr]^{\rho}\ar@{->}[d]_{\pi}&&\widehat{X}\ar@{->}[d]^{\phi}\\%
X\ar@{-->}[rr]^{\chi}&&Y}
$$
where 
\begin{itemize} 
\item 
$\rho$ is a composition of flops in the strict transform of the lines passing 
through pairs of singular points, 
\item $\phi$ is a contraction of the strict transform of 
the hyperplane section containing $p_1\ldots,p_4$ to a smooth point $q$ of the threefold $Y$,
and
\item 
$Y$ is a form of a smooth divisor in $(\mathbb{P}^1)^4$ of degree $(1,1,1,1)$ with Picard rank 2. 
Such divisors with Picard rank 1 are conjecturally irrational, see \cite[Conjecture 1.3]{KuznetsovProkhorov2022}. 
\end{itemize}

Over $\bR$, $Y$ can be embedded in $\bP^3_{y_1,\ldots y_4}\times\bP^3_{z_1,\ldots z_4}$ with equations 
$$
\begin{cases}
    y_1^2=y_2^2+y_3^2+y_4^2,\\
    z_1^2=z_2^2+z_3^2+z_4^2,\\
    \mathbf{y}\cdot M\cdot\mathbf{z^t}=0,
\end{cases}
$$
where 
$
\mathbf{y}=(y_1,\ldots,y_4), 
\mathbf{z}=(z_1,\ldots,z_4),
$
and $M$ is a $4\times 4$ matrix with real entries. Finding simple normal forms of $Y$ is related to the singular value decomposition in Minkowski space \cite{svdminkowski}.

The map $\chi^{-1}$ is given by the linear system $|\phi^*(-K_Y)-3E)|$, where $E$ is the exceptional divisor of $\phi$ above $q.$ Different choices of $q$ on $Y$ lead to birational maps to different cubic threefolds with 4 singular points. This allows to find birational maps between cubics with $4\sA_1, 4\sA_2$ and $4\sA_2+2\sA_1$-singularities. 
Explicitly, let $X$ be a cubic given by \eqref{eqn:4a1normal} with $2\sA_2+2\sA_1$ or $4\sA_2$-singularities. Let $p_5$ be a general point on $X$. Then the 
restriction to $X$ of the linear subsystem in $|\cO_{\bP^4}(4)|$ consisting of quartics
having singularities of multiplicity 3 at the points $p_1,\ldots, p_5$ 
gives a birational map $X\dashrightarrow X'$, where $X'$ is a cubic threefold with $4\sA_1$-singularities in general position. 

We study the topology of $X(\bR)$. As in \eqref{eqn:conicbundle2}, $X$ is birational to the conic bundle 
$$
Y=\{y_1^2+y_2^2+(x_3+x_4)f_3-\frac{q_1^2+q_2^2}{4}\}\subset\bP_{y_1,y_2,x_1,x_2,x_3}(2,2,1,1,1),
$$
with 
\begin{align}\label{eqn:4sinconic}
\pi: Y\to\bP^2_{x_3,x_4,x_5},\quad (y_1,y_2,x_3,x_4,x_5)\mapsto (x_3,x_4,x_5),
\end{align}
where 
\begin{align*}
q_1&=x_3^2+x_4^2+b_1x_5^2+t_1x_3x_5+t_2x_4x_5,\\
q_2&=x_3^2+x_4^2+b_2x_5^2+t_1x_4x_5+t_2x_3x_5,\\
f_3&=ax_5^3+b_3x_3x_5^2+b_4x_4x_5^2.
\end{align*}
The real locus of the discriminant curve in $\bP^2$ is smooth for all three singularity types of $X$. It follows that $Y(\bR)$ and $X(\bR)$ are connected if and only if $\pi(Y(\bR))$ is. 
\begin{exam} 
We provide examples with $X(\bR)$
    \begin{itemize}
    \item disconnected: $
a=-3, b_1=b_2=b_3=b_4=0, t_1=3, t_2=4,
$\item connected: $
a=0, b_1=b_2=b_3=b_4=t_1=t_2=1.
$    \end{itemize}   

\end{exam}

%\subsection*{$2\sA_2+2\sA_1$-singularities} Assume $p_1$, $p_2$ are $\sA_2$-singularities and $p_3$, $p_4$ are $\sA_1$-singularities. Then, as in \cite[Proposition 7.1]{CMTZ}, 
%$$
%b_1=b_2=-\frac{(t_1-t_2)^2}{8}
%$$
% in \eqref{eqn:4a1normal}. It follows that the normal form of $X$ is given by
%\begin{multline}\label{eqn:2a22a1normal}
  %  (x_1+x_2)(x_3^2+x_4^2)+ (x_3+x_4)(x_1^2+x_2^2)+ax_5^3+\\+x_5^2(b_3x_3+b_4x_4-\frac{(t_1-t_2)^2}{8}(x_1+x_2))+\\+
%x_5(t_1(x_1x_3+x_2x_4)+t_2(x_1x_4+x_2x_3))=0,
%\end{multline}
%for general $a,t_1,t_2,b_3,b_4\in\bR.$
%\begin{exam}
  %  We provide examples with $X(\bR)$ 
 %   \begin{itemize}
 %   \item disconnected: $
%a=-3, b_3=b_4=0, t_1=3, t_2=4,
%$
%\item connected: $
%a=b_3=-2, b_4=1, t_1=-4,  t_2=0.
%$
%\end{itemize}   
%As before, one can verify this via the conic bundle structure \eqref{eqn:4sinconic} and the connectedness of its image in $\bP^2$.
%\end{exam}

\subsection*{$2\sA_3+2\sA_1$-singularities}
Assume that $p_1$, $p_2$ are $\sA_3$-singularities and $p_3$, $p_4$ are $\sA_1$-singularities. From \cite[Section 7]{CMTZ}, we know that the defect of $X_\bC$ is 1 or 2. 
When the defect is $1,$ by \cite[Lemma 7.3]{CMTZ}, $X$ is rational over $\bR$. 
When the defect is $2$, $X_\bC$ contains three planes. Two of them are spanned by
$$
\Pi_1=\langle p_2,p_3,p_4\rangle,\quad\Pi_2=\langle p_1,p_3,p_4\rangle,
$$
and thus are complex conjugate. 
This implies that $r_1=r_2=0$ in \eqref{eqn:4singnormal}. The normal form of such $X$ is 
\begin{multline*}
    x_4(x_1^2+x_2^2)+ax_5^3+x_5^2(b_1x_1+b_2x_2+b_3x_3+b_4x_4)+\\+
    x_5(t_2(x_1+x_2)x_4+t_6(x_3^2+x_4^2)),
\end{multline*}
with parameters $b_1,b_2,b_3,b_4,t_2,t_6,a\in\bR.$ The third plane is defined over $\bR$ and is given by
$$
\Pi_3=\{x_4=x_5=0\}.
$$
As before, projecting from $\Pi_3,$ the projection map 
\begin{align*}
    \pi: X\to \bP^1, \quad (\mathbf x)\mapsto (x_4, x_5),
\end{align*}
endows $X$ with the structure of a quadric surface bundle.

\begin{prop}
Let $\Delta\in\bR[x_5]$ be given by  
$$
\Delta(x_5):=-\frac{b_1^2+b_2^2}{4} x_5^3 + \frac{-2t_2t_6(b_1+b_2) -b_3^2 + 4at_6}{4t_6}x_5^2+ (b_4 
    - \frac12t_2^2)x_5 + t_6.
$$
The locus $X(\bR)$ is disconnected if and only if $\Delta(x_5)$ has three real roots $\alpha_1<\alpha_2<\alpha_3$ and one of the following holds
\begin{itemize}
 \item $t_6>0$ and $0<\alpha_1$, or
  \item $t_6<0$ and $\alpha_3<0$.
\end{itemize}

\end{prop}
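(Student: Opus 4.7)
The plan is to follow the same scheme as the proof of Lemma~\ref{lemm:topo2d41pl}, applied to the quadric surface bundle
$$
\pi\colon X\dashrightarrow\bP^1_{x_4,x_5}
$$
obtained by projection from the real plane $\Pi_3$. Since every nonempty real fiber of $\pi$ is connected (a smooth sphere or one-sheeted hyperboloid for generic parameter, a real quadric cone or isolated real point at degeneration values), $X(\bR)$ is disconnected if and only if the set of $s\in\bP^1(\bR)$, with $s=x_5/x_4$, over which the real fiber is nonempty fails to be connected in $\bP^1(\bR)\cong S^1$.

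First I would compute the residual quadric over $[x_4:x_5]=[1:s]$ by substituting $x_4=t$, $x_5=ts$ into the defining equation, dividing out the factor $t$ coming from $\Pi_3\subset X$, and obtaining a quadric
$$
Q(s)\subset\bP^3_{x_1,x_2,x_3,t}.
$$
The $\sA_3$-condition at $p_1,p_2$ forces $(b_1,b_2)\ne(0,0)$ and $t_6\ne 0$, so completing squares in $x_1,x_2,x_3$ is legitimate away from finitely many values of $s$, and yields the real-diagonal form
$$
Q(s)\;\cong\;u_1^2+u_2^2+(t_6 s)\,u_3^2+s\,\Delta(s)\,u_0^2=0,
$$
with $\Delta(s)$ exactly the cubic in the statement.

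Reading off the signature, $Q(s)(\bR)$ is empty precisely when $t_6 s>0$ and $s\Delta(s)>0$, and is connected otherwise. I then need to determine when the empty-fiber locus
$$
\cE=\{s\in\bR:\; t_6 s>0\text{ and }s\,\Delta(s)>0\}
$$
disconnects $\bP^1(\bR)$. Splitting on the sign of $t_6$: for $t_6>0$ one has $\cE=\{s>0:\Delta(s)>0\}$, and for $t_6<0$ one has $\cE=\{s<0:\Delta(s)<0\}$. Since $\Delta$ is a genuine cubic with leading coefficient $-(b_1^2+b_2^2)/4<0$, an enumeration of the sign patterns of $\Delta$ relative to $0$ shows that the complement of $\cE$ in $\bP^1(\bR)$ becomes disconnected exactly in the two listed situations: three real roots $\alpha_1<\alpha_2<\alpha_3$ all of the same sign, positive when $t_6>0$ and negative when $t_6<0$. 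In all other configurations, the empty-fiber locus consists of at most one bounded open interval, so its complement contains an arc through $\infty\in\bP^1(\bR)$ joining the pieces into one component.

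The main obstacle is the careful handling of the boundary values $s=0$, $s=\infty$, and those $s$ at which $\Delta$ vanishes: one must verify that the corresponding real fibers are nonempty and connected, so that they do not themselves disconnect $X(\bR)$ nor spuriously merge separated pieces. I would check this directly in the two affine charts of $\bP^1_{x_4,x_5}$, using that at each such value one of the quadratic coefficients of $Q$ drops out, leaving a free linear direction which guarantees a real point on the degenerate fiber. With this observation, the sign analysis above yields the stated criterion.
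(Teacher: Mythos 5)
Your proposal is correct and follows essentially the same route as the paper: project from the real plane $\Pi_3$, diagonalize the residual quadric over $[1:s]$ to $u_1^2+u_2^2+t_6s\,u_3^2+s\Delta(s)u_4^2=0$, read off that the real fiber is empty iff $t_6$, $s$, $\Delta(s)$ share a sign, and translate this into the root conditions on the cubic $\Delta$ (whose leading coefficient is negative since the $\sA_3$-condition forces $(b_1,b_2)\neq(0,0)$, just as it forces $t_6\neq 0$). The paper's proof is terser and leaves the sign/root bookkeeping and the fibers over $s=0,\infty$ implicit, whereas you flag and handle them explicitly; the substance is the same.
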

\begin{proof}
Set $x_4=1$. The fiber $\pi^{-1}(x_5)$ is isomorphic to the quadric surface 
$$
\{u_1^2+u_2^2+t_6x_5u_3^2+x_5\Delta(x_5)u_4^2 =0\}\subset\bP^3_{u_1,\ldots,u_4}
$$
%where $\Delta(x_5)\in\bR[x_5]$ is given by
%$$
%\Delta(x_5)=-\frac{b_1^2+b_2^2}{4} x_5^3 + \frac{-2t_2t_6(b_1+b_2) -b_3^2 + 4at_6}{4t_6}x_5^2+ (b_4 
%    - \frac12t_2^2)x_5 + t_6.
%$$
Its real locus is empty if and only if 
$
t_6, x_5
$ and $\Delta(x_5)$ have the same sign. Note that $t_6\ne 0$ due to the singularities of $X_\bC$. 
\end{proof}
%\subsection*{$4\sA_2$-singularities} Assume $p_1,p_2,p_3$, and $p_4$ are $\sA_2$-singularities. As above, the normal form of $X$ is given by
%\begin{multline}\label{eqn:2a22a1normal}
   % (x_1+x_2)(x_3^2+x_4^2)+ (x_3+x_4)(x_1^2+x_2^2)+ax_5^3+\\+\frac{(t_1-t_2)^2}{8}x_5^2(-x_1-x_2-x_3-x_4)+\\+
%x_5(t_1(x_1x_3+x_2x_4)+t_2(x_1x_4+x_2x_3))=0,
%\end{multline}
%for general $a,t_1,t_2\in\bR.$
%\begin{exam}We provide examples with  $X(\bR)$ 
%\begin{itemize}
%\item disconnected: $
%a=100, t_1=t_2=-10,
%$
%\item connected: $
% a=-2, t_1=-4,t_2=0.
%$
%\end{itemize}
%This can be checked via the conic bundle structure as before.
%\end{exam}

\subsection*{$2\sD_4+2\sA_1$-singularities}Assume $p_1$ and $p_2$ are $\sD_4$-singularities and $p_3$ and $p_4$ are $\sA_1$-singularities of $X_\bC.$ 
We recall some basic geometry from \cite[Section 7]{CMTZ}. The class group $\Cl(X_\bC)$ is generated by the classes of five planes $\Pi_1,\ldots,\Pi_5$ in $X_\bC$ and the class of a general hyperplane section $F$, subject to relations 
$$
\Pi_1+\Pi_2+\Pi_4=\Pi_3+\Pi_4+\Pi_5=F\in\Cl(X_\bC).
$$
One observes that $\Pi_4$ is defined over $\bR$. We have  %It follows that $X$ contains at least one plane over $\bR$. Let $p_1, p_2$ be the $2\sD_4$-singularities of $X_\bC,$ and $p_3, p_4$ be the $2\sA_1$-singularities. From the equations in \cite[Section 7]{CMTZ}, one has 
 \begin{align}\label{eqn:2d42a1plpt}
     \Pi_1\supset\{p_2, p_3, p_4\},\quad \Pi_2\supset\{p_1, p_3, p_4\},\quad \Pi_3,\Pi_4,\Pi_5\supset\{p_1,p_2\}.
 \end{align}
The line passing through $p_3$ and $p_4$ is disjoint from $\Pi_3$ and $\Pi_5$.

Now consider the equation \eqref{eqn:4singnormal}. The inclusions in \eqref{eqn:4a1normal} imply that $r_1=r_2=0.$ Up to isomorphism, we may assume 
$$
r_4=t_1=t_4=t_5=0,\quad r_3=1.
$$
The $\sD_4$-singularities impose the conditions 
$$
t_2+it_3=-it_3t_5-b_1+ib_2=0.
$$
Since all parameters are real numbers, we have 
$$
t_2=t_3=b_1=b_2=0.
$$
After a change of variables, the normal form of $X$ is given by
\begin{align}\label{eqn:2d4+2a1normal}
    (x_1^2 + x_2^2)x_3 + x_5(x_3^2 +x_4^2) + (b_3x_3+b_4x_4)x_5^2 + ax_5^3=0,
\end{align}
for $a\in\bR^\times$ and $b_3,b_4\in\bR.$ Note that a further change of variables can set $a=\pm1.$
Over $\bC$, the 5 planes are given by
$$
\Pi_1=\{x_1+ix_2=x_5=0\},\quad \Pi_2=\{x_1-ix_2=x_5=0\},
$$
$$
\Pi_3=\{x_3=x_5=0\},\quad\Pi_4=\{x_3=2x_4+\left(b_4+\sqrt{b_4^2-4a}\right)x_5=0\},
$$
$$
\Pi_5=\{x_3=2x_4+\left(b_4-\sqrt{b_4^2-4a}\right)x_5=0\}.
$$
 
 Rationality of $X$ is determined by the {\bf (H1)}-obstruction. The following is analogous to \cite[Corollary 7.8]{CMTZ}.

\begin{prop}
\label{prop:2d4+2a1H1}
Assume that $X$ has no real singular points
and that $X_{\bC}$ has $2\sD_4+2\sA_1$ singularities. 
Then $X$ is not (stably) rational over $\bR$ if and only if $X$ contains only one real plane.
\end{prop}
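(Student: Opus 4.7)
My plan splits the proof by the sign of the discriminant $b_4^2-4a$ in the normal form \eqref{eqn:2d4+2a1normal}. Among the five planes listed after \eqref{eqn:2d4+2a1normal}, the pair $\Pi_1,\Pi_2$ is always complex-conjugate, the plane $\Pi_4=\{x_3=x_5=0\}$ is always real, and $\Pi_3,\Pi_5$ are real precisely when $b_4^2>4a$ and complex-conjugate when $b_4^2<4a$ (the degenerate case $b_4^2=4a$ is excluded by the singularity conditions). So ``one real plane'' corresponds to $b_4^2<4a$ and ``three real planes'' corresponds to $b_4^2>4a$.

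For the direction ``three real planes $\Rightarrow$ rational,'' I will apply the ``line $+$ disjoint plane'' construction recalled in the introduction. The plan is to use the real line
\[
M=\{x_1=x_2=x_5=0\},
\]
which joins the complex-conjugate $\sA_1$-singularities $p_3,p_4$ and is contained in $X$ by direct substitution into \eqref{eqn:2d4+2a1normal}. When $b_4^2>4a$, the real plane $\Pi_3=\{x_3=0,\,2x_4+(b_4+\sqrt{b_4^2-4a})x_5=0\}$ has empty intersection with $M$ (the system $x_1=x_2=x_3=x_5=0$ together with $2x_4=0$ forces all coordinates to vanish), so projection from $M$ and $\Pi_3$ yields a real birational map $X\dashrightarrow\bP^2\times\bP^1$.

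For the converse I will establish the \textbf{(H1)}-obstruction. Using the basis $\Pi_1,\Pi_2,\Pi_3,\Pi_4$ of $\Cl(X_\bC)$, the given relations yield $\Pi_5=\Pi_1+\Pi_2-\Pi_3$ and $F=\Pi_1+\Pi_2+\Pi_4$; in the one-real-plane case the Galois involution acts by $\Pi_1\leftrightarrow\Pi_2$, $\Pi_4\mapsto\Pi_4$ and $\Pi_3\mapsto\Pi_5=\Pi_1+\Pi_2-\Pi_3$. A direct calculation of $\ker(1+\sigma)/\mathrm{Im}(1-\sigma)$, running parallel to the equivariant one in \cite[Proposition~7.7]{CMTZ}, will yield
\[
\mathrm{H}^1(\Gamma,\Cl(X_\bC))=\bZ/2,
\]
generated by the class of $\Pi_1-\Pi_3$. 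To finish, I will lift this to $\mathrm{H}^1(\Gamma,\Pic(\tilde X_\bC))$ by arguing that the exceptional divisors from the Galois-conjugate resolutions of the pairs $\{p_1,p_2\}$ and $\{p_3,p_4\}$ assemble into permutation $\Gamma$-modules with trivial first cohomology, leaving the $\bZ/2$-class intact.

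The main technical point will be this last step: verifying that the divisorial exceptional components of the two conjugate $\sD_4$-resolutions are swapped as whole configurations by complex conjugation, so that their contribution to $\Pic(\tilde X_\bC)$ is a permutation summand. The bookkeeping is more involved than for $\sA_1$-points, but it should run parallel to the equivariant verification in \cite[Corollary~7.8]{CMTZ}, since the combinatorics of the $\sD_4$ exceptional configuration and the swap induced by $\Gamma$ are identical to those induced by the $C_2$-symmetry treated there.
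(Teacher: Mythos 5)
Your proposal is correct and takes essentially the same route as the paper: rationality in the three-real-plane case comes from the real line through the two $\sA_1$-points, which is disjoint from a real plane of $X$, and the \textbf{(H1)}-obstruction in the one-real-plane case comes from the Galois swap $\Pi_1\leftrightarrow\Pi_2$, $\Pi_3\leftrightarrow\Pi_5$ (the paper simply cites \cite[Proposition 7.7]{CMTZ} for the cohomology computation that you write out explicitly, and your answer $\rH^1=\bZ/2$ generated by $\Pi_1-\Pi_3$ checks out). For your final lifting step, the $\sD_4$-specific bookkeeping you anticipate is unnecessary: since the singular points are interchanged in conjugate pairs, no exceptional divisor of the ($\Gamma$-equivariant) resolution is fixed by $\Gamma$, so the exceptional lattice is an induced $\bZ[\Gamma]$-module with vanishing $\rH^1$ \emph{and} $\rH^2$, and the exact sequence $0\to\bigoplus_i\bZ E_i\to\Pic(\tilde X_\bC)\to\Cl(X_\bC)\to 0$ immediately gives $\rH^1(\Gamma,\Pic(\tilde X_\bC))\cong\rH^1(\Gamma,\Cl(X_\bC))$.
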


\begin{proof}
%   We consider the case where $X$ has no real singular point.
From the equations we see that either $X$  contains only one plane $\Pi_4$ or three planes $\Pi_3,\Pi_4,\Pi_5$. Assume $X$ contains only $\Pi_4$. Then the Galois group $\Gamma$ switches $\Pi_1\leftrightarrow\Pi_2$ and  $\Pi_3\leftrightarrow\Pi_5$. One computes, cf. \cite[Proposition 7.7]{CMTZ}, that 
   \begin{align}\label{eq:2d42a1h1}
       \rH^1(\Gamma,\Pic(\tilde X_\bC))=\bZ/2,
   \end{align} 
   and thus $X$ is not stably rational over $\bR.$

   When $X$ contains three planes, the real line passing through $p_3$ and $p_4$ is disjoint from $\Pi_3$, thus $X$ is rational over $\bR.$
\end{proof}
\begin{coro}
    Let $X$ be given by \eqref{eqn:2d4+2a1normal}. Then $X$ is (stably) rational if and only if $b_4^2>4a$.
\end{coro}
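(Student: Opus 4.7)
The plan is to deduce this corollary directly from the preceding Proposition, which states that a cubic $X$ with $2\sD_4+2\sA_1$ singularities and no real singular points is (stably) rational over $\bR$ if and only if $X$ contains three real planes (as opposed to only one). So the task reduces to a count of real planes among $\Pi_1, \ldots, \Pi_5$ for the specific normal form \eqref{eqn:2d4+2a1normal}.

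First I would invoke the explicit description of the five planes in $X_\bC$ given just above Proposition~\ref{prop:2d4+2a1H1}. Since $p_1,p_2$ are complex conjugate, $\Pi_1=\{x_1+ix_2=x_5=0\}$ and $\Pi_2=\{x_1-ix_2=x_5=0\}$ are always complex conjugate and never contribute to the real count. Moreover $\Pi_3=\{x_3=x_5=0\}$ is always defined over $\bR$. Thus the number of real planes among $\Pi_3,\Pi_4,\Pi_5$ depends entirely on whether
$$
\Pi_4=\{x_3=2x_4+(b_4+\sqrt{b_4^2-4a})x_5=0\}, \quad \Pi_5=\{x_3=2x_4+(b_4-\sqrt{b_4^2-4a})x_5=0\}
$$
are defined over $\bR$. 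This happens precisely when $\sqrt{b_4^2-4a}\in\bR$, i.e., when $b_4^2\geq 4a$.

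Next I would address the boundary case $b_4^2=4a$. Here $\Pi_4$ and $\Pi_5$ coincide as schemes, and a short gradient computation on the equation \eqref{eqn:2d4+2a1normal} shows that $X$ acquires additional singularities along the conic $\{x_3=0,\ 2x_4+b_4x_5=0,\ x_1^2+x_2^2+b_3x_5^2=0\}$; this contradicts the standing hypothesis that $X_\bC$ has exactly the singularities $2\sD_4+2\sA_1$. Hence the equality case does not occur, and we have a clean dichotomy: $b_4^2>4a$ gives three real planes $\Pi_3,\Pi_4,\Pi_5$, while $b_4^2<4a$ gives $\Pi_3$ as the only real plane.

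Combining these observations with Proposition~\ref{prop:2d4+2a1H1}, $X$ is (stably) rational over $\bR$ if and only if $b_4^2>4a$. The only real obstacle I anticipate is the bookkeeping around the boundary case $b_4^2=4a$: one must verify that this locus is excluded by the smoothness assumptions rather than treated as a separate rationality case, but the singular-locus computation above resolves this cleanly.
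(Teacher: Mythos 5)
Your argument is correct and is exactly the intended one: the corollary follows from Proposition~\ref{prop:2d4+2a1H1} by reading off from the explicit equations of $\Pi_1,\dots,\Pi_5$ that $\Pi_4,\Pi_5$ are real precisely when $\sqrt{b_4^2-4a}\in\bR$, so $X$ has three real planes iff $b_4^2>4a$ and one real plane iff $b_4^2<4a$. Your additional check that $b_4^2=4a$ forces non-isolated singularities along a conic (so the boundary case is excluded by the standing hypotheses) is a worthwhile detail the paper leaves implicit.
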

\begin{rema}
The quadric surface bundle structure $\pi: X\to \bP^1$ obtained via projection from $\Pi_4$ has been used, in \cite[Section 11.2]{CTA},
to establish the nontriviality of cohomology in \eqref{eq:2d42a1h1}, via a computation of the Brauer group and the identity
$$
\rH^1(\Gamma,\Pic(\tilde X_\bC))=\Br(\tilde X_\bR)/\Br(\bR).
$$
    
\end{rema}
\section{Six singular points}
\label{sect:six}
\subsection*{$6\sA_1$-singularities with no plane}
\begin{prop}
    Let $X$ be a cubic without real singular points and such that $X_{\bC}$ has $6\sA_1$-singularities in linearly general position. Then $X$ is (stably) rational over $\bR$ %if and only if it fails ${\bf (H1)}$ 
    if and only if $X$ contains a normal rational cubic scroll defined over $\bR$. 
\end{prop}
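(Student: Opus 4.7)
The plan is to prove the biconditional by treating the two implications separately, using the scroll for an explicit rationality construction in one direction and the $\mathbf{(H1)}$-obstruction for the converse.

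For sufficiency, assume $X$ contains a real normal rational cubic scroll $S\subset X$. Such an $S$ is ruled by a $\bP^1$-family of lines, all of which lie on $X$. A standard construction uses $S$ to exhibit a birational equivalence from $X$ onto a rational variety over $\bR$: blowing up $S$ in $X$ and exploiting the linear system of quadrics through $S$ (which form a net, since $S$ is nondegenerate and cut out by three quadrics in $\bP^4$) produces, after resolving indeterminacies, a morphism with rational generic fiber. Concretely, residual intersection with the net of quadrics realizes $X$ as birational to a rational surface bundle over $\bP^1$. Since $S$, its ruling, and the net of quadrics through it are all defined over $\bR$, the construction descends and gives rationality of $X$ over $\bR$.

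For necessity, I would argue the contrapositive via cohomology. By \cite[Section 7]{CTZ}, when $X_\bC$ has $6\sA_1$-singularities in linearly general position, $X_\bC$ contains exactly five normal rational cubic scrolls, and $\Pic(\tilde X_\bC)$ is generated by the hyperplane class, the six exceptional divisors above the nodes, and the classes of these scrolls, modulo explicit linear relations. The Galois group $\Gamma$ acts on this lattice, permuting the nodes in three conjugate pairs and permuting the five scrolls compatibly. Assuming $X$ has no real cubic scroll, no scroll class is $\Gamma$-fixed, and the resulting $\Gamma$-action matches, up to relabeling, the $C_2$-action in \cite[Proposition 7.5]{CTZ} that does not fix a node. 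The same computation yields
$$
\rH^1(\Gamma, \Pic(\tilde X_\bC))=\bZ/2,
$$
hence $X$ is not stably rational over $\bR$. Conversely, a real scroll would supply a $\Gamma$-fixed class on which the cocycle evaluates trivially, so the $\mathbf{(H1)}$-obstruction vanishes and does not rule out rationality — combined with the sufficiency direction, this closes the equivalence.

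The main obstacle I anticipate is the precise lattice-theoretic correspondence between ``$X$ contains a real cubic scroll'' and ``the induced $\Gamma$-action on scroll classes admits a fixed class realized by a real effective divisor.'' One must verify that any $\Gamma$-invariant integral combination of scroll classes which would trivialize the cocycle in $\rH^1(\Gamma,\Pic(\tilde X_\bC))$ actually arises from a genuine real scroll, rather than merely from a formal sum of conjugate scroll classes. This reduces to tracking $\Gamma$-orbits of the five scrolls modulo the relations among scrolls, hyperplane sections, and exceptional divisors established in \cite{CTZ}; once this dictionary is set up, the cohomology computation is a direct invocation of \cite[Proposition 7.5]{CTZ}, analogous in spirit to the $2\sA_5$ and $8\sA_1$ cases in Proposition~\ref{prop:h1sum}.
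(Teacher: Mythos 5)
Your overall strategy coincides with the paper's (a geometric construction from the scroll for sufficiency, the $\mathbf{(H1)}$-obstruction for necessity), but two of your supporting claims are wrong in ways that matter. First, the lattice description: for $6\sA_1$-singularities in linearly general position, $\Cl(X_\bC)$ is generated by the hyperplane class $H$ and just \emph{two} scroll classes $S_1$, $S_2$, subject to $2H=S_1+S_2$; each class is a positive-dimensional linear system of scrolls, so there are infinitely many scrolls, not ``exactly five'' (you appear to be conflating this with the five planes in the $8\sA_1$ case). This matters because the correct structure dissolves the ``main obstacle'' you flag at the end: $\Gamma$ either fixes the classes $S_1,S_2$ or swaps them. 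If it fixes them, $|S_1|$ is defined over $\bR$ and a general member is a genuine real scroll (no issue of formal sums of conjugate classes arises); if it swaps them, the computation of \cite[Proposition 7.5]{CTZ} gives $\rH^1(\Gamma,\Pic(\tilde X_\bC))=\bZ/2$. With your five-scroll picture the case analysis you propose does not close.

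Second, and more seriously, your sufficiency argument has a gap precisely at the point where the non-closed field enters. The linear system of quadrics through $S$ restricted to $X$, i.e.\ $|\cO_X(2)-S|$, is a net and maps $X$ to $\bP^2$ (not $\bP^1$); after resolution this exhibits $X$ as a $\bP^1$-bundle over $\bP^2$. Your formulation --- ``a morphism with rational generic fiber,'' realized as ``a rational surface bundle over $\bP^1$'' --- is dimensionally inconsistent with the net, and, more importantly, geometric rationality of the fibers is not enough over $\bR$: a conic bundle or a surface fibration with geometrically rational fibers can easily fail to be rational over a non-closed field. The argument must establish that the fibration is an honest Zariski-locally trivial $\bP^1$-bundle (equivalently, that it admits a section over $\bR$), which is what the paper invokes from \cite[Section 7]{CTZ}; saying ``the construction descends'' only gives you a fibration over $\bR$, not rationality of its total space.
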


\begin{proof}
Recall from \cite[Section 7]{CTZ} that the class group of $X_\bC$ is generated by the hyperplane section $H$ and two classes of normal rational cubic scrolls $S_1$ and $S_2$, subject to the relation $2H=S_1+S_2$. 

First we show that $X$ contains a real normal rational cubic scroll if and only if the classes $S_1$ and $S_2$ are $\Gamma$-invariant. 
 Assume that $S_1$ and $S_2$ are $\Gamma$-invariant, then a general section of the linear system $|S_1|$ is such a cubic scroll. The other direction is easy to see.

When $X$ does not contain a real cubic scroll,  $\Gamma$ switches $S_1$ and $S_2$, and one can compute, cf. \cite[Proposition 7.5]{CTZ}, that 
$$
\rH^1(\Gamma,\Pic(\tilde X_\bC))=\bZ/2\bZ.
$$
It follows that $X$ is not stably rational over $\bR$. 

When $X$ contains a real normal rational cubic scroll. The linear system $|\cO_{X}(2)-S|$ gives a rational map $X\dashrightarrow \bP^2$, whose resolution gives $X$ the structure of a $\bP^1$-bundle over $\bP^2$, see \cite[Section 7]{CTZ}. Such a $\bP^1$-bundle over $\bR$ admits a section and is therefore rational.
  %  When {\bf (H1)} fails, we know that $X$ is not stably rational over $\bR.$ If there is no {\bf (H1)}-obstruction, by \cite[Section 7]{CTZ}, the class group of $X$ is generated by the hyperplane section $H$ and two Galois-invariant classes of normal cubic scrolls $S_1$ and $S_2$. This implies that there exists a real normal cubic scroll $S$ contained in $X.$\ZZ{???}
\end{proof}

\begin{rema}
   In \cite[Proposition 7.1]{CTZ}, it is shown that any involution $\iota\in \Aut(X_\bC)$ not fixing any singular points acts nontrivially on  $\Cl(X_{\bC})$, see also Example~\ref{exam:6A1}.

    However, this does not hold for the Galois 
    action. For example, let $X$ be the cubic given by 
    $$
x_1(x_3^2-x_4^2-x_5^2) + 5x_2x_3x_4 - x_5(x_1^2+4x_2^2-x_3^2) =0.
    $$
    Then $X_{\bC}$ has three pairs of conjugate $\sA_1$-singular points in linearly general position. But $\Gamma$ acts trivially on $\Cl(X_\bC)$. In particular, $X$ contains a real cubic scroll given by
    \begin{multline*}
        \{-4x_2x_3 - x_3^2 + x_1x_4 + x_1x_5
=x_1x_3 + x_2x_4 - x_3x_4 +\\+ x_2x_5 + x_3x_5=
x_1^2 + 4x_2^2 - 3x_2x_3 - x_3^2 + 2x_1x_5=0\}\subset X.
    \end{multline*}
    %To see this, one may check that 
   % \begin{align*}
       % &\{x_1^2 + 4x_2^2 - 5ix_2x_3 - x_3^2 + 2ix_1x_4 + 2x_1x_5=0\}\cap X_\bC= S_1\cup S_2,\\
       %  &\{x_1^2 + 4x_2^2 - 3ix_2x_3 + x_3^2=0\}\cap X_\bC= \bar S_1\cup S_2,
   % \end{align*}
%where $S_1$ and $S_2$ are irreducible divisors in $X_\bC$ corresponding to the classes of cubic scrolls in $\Cl(X_\bC)$. We see that $S_1$ is equivalent to its complex conjugate $\bar S_1$ in $\Cl(X_\bC)$, and the same holds for $S_2$. 
Therefore $X$ is rational. In this case, the Galois action does not fix any singular points but acts trivially on $\Cl(X_\bC)$ and satisfies ${\bf (H1)}$.  This does not happen in the equivariant setting.
    
Recall that there are also examples where the Galois action fails ${\bf (H1)}$ and the corresponding cubic is not stably rational, see Example~\ref{exam:6A1}.
\end{rema}

\subsection*{$6\sA_1$-singularities with one plane}

All such cubics are rational:
\begin{prop}\label{prop:6a1h1}
    Let $X$ be a cubic such that $X_{\bC}$ has $6\sA_1$-singularities and one plane. Then $X$ is rational over $\bR$.
\end{prop}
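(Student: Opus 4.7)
The plan is to exhibit a real line $\ell \subset X$ disjoint from the unique plane $\Pi \subset X$; once such $\ell$ is found, the standard projection from a line and a disjoint plane (recalled in the introduction) yields a birational map $X \dashrightarrow \bP^2\times\bP^1$ over $\bR$, which gives rationality. Since $\Pi$ is the unique plane contained in $X_\bC$, it is $\Gamma$-invariant and thus defined over $\bR$.

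Because $X$ has no real singular points, the six $\sA_1$-nodes of $X_\bC$ form three Galois-conjugate pairs $\{p_j,\bar p_j\}$, $j=1,2,3$. For each pair, the line $\ell_j:=\overline{p_j\bar p_j}$ is real. A Bezout count forces $\ell_j\subset X$: indeed, $\ell_j\cap X$ has multiplicity $\geq 2$ at each node, total $\geq 4>3=\deg X$. This produces three canonical candidate real lines in $X$.

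The main step is to show that at least one of the $\ell_j$ is disjoint from $\Pi$. I would start from a normal form for $X_\bC$ with $6\sA_1$-singularities and exactly one plane, adapted from \cite[Section 7]{CTZ}, and impose reality on the coefficients; the uniqueness of $\Pi$ (as opposed to a triple of planes, cf. the $6\sA_1$ with three planes case) picks out specific conditions. One then computes the three lines $\ell_j$ and their intersection with $\Pi$ in these coordinates. The geometric reason one expects the disjointness to hold is: if all three $\ell_j$ met $\Pi$, then the incidence of $\Pi$ with $\ell_1,\ell_2,\ell_3$ together with the six nodes would force additional planes in $X_\bC$ (spanned, e.g., by pairs $\ell_j\cup(\ell_j\cap\Pi)$ together with appropriate residual conics), contradicting the hypothesis of a \emph{unique} plane.

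The hardest step is verifying, uniformly in all configurations, that not all three $\ell_j$ can meet $\Pi$; this is the bookkeeping core of the argument. An alternative, possibly more efficient route is to use unprojection from $\Pi$ to transfer $X$ birationally to an intersection of two quadrics $X_{2,2}\subset\bP^5$ (with singularities corresponding to the nodes of $X$ lying off $\Pi$), and then to show that this $X_{2,2}$ always contains a real line, applying \cite{HT-quad} (adjusted for the residual nodes) to conclude rationality. Either way, the existence of a real line $\ell$ disjoint from $\Pi$ is the only missing input, and produces the birational map $X\dashrightarrow \bP^2\times\bP^1$ that proves rationality over $\bR$.
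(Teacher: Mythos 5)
Your overall strategy is the same as the paper's: produce a real line in $X$ disjoint from the (necessarily real, by uniqueness) plane $\Pi$, and conclude by projection from the line and the plane. Your observations that $\Pi$ is Galois-invariant, that the six nodes fall into three conjugate pairs, and that the line through any conjugate pair lies on $X$ by Bezout are all correct. But the central step --- showing that at least one of these three real lines is disjoint from $\Pi$ --- is exactly the point you leave unproven, and the heuristic you offer for it (that incidence of all three lines with $\Pi$ would force extra planes) is not substantiated.

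What closes the gap, and what the paper actually uses, is a structural fact about this configuration from \cite[Section 7]{CTZ}: in the $6\sA_1$ case with exactly one plane, the plane $\Pi$ contains precisely four of the six nodes, and the line through the remaining two nodes is disjoint from $\Pi$. Since $\Pi$ is Galois-invariant, the set of four nodes on $\Pi$ is Galois-stable, hence so is the residual pair; as $X$ has no real singular points, that pair is complex-conjugate and the line it spans is real. This immediately produces the required real line without any case analysis over the three candidate lines or any normal-form computation. Your alternative route via unprojection from $\Pi$ is also problematic as stated: because $\Pi$ contains only four of the six nodes, the resulting intersection of two quadrics is singular (it retains the two residual nodes), so \cite{HT-quad}, which concerns smooth $X_{2,2}\subset\bP^5$, does not apply directly; the ``adjustment for the residual nodes'' you allude to would itself need an argument. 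As written, the proposal is a correct plan with its key lemma missing.
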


\begin{proof}
The unique plane is defined over $\bR$ and contains four of the $\sA_1$-points, see \cite[Section 7]{CTZ}. The line through the two other $\sA_1$-points is also defined over $\bR$ and is disjoint from the plane. This implies rationality of $X$ over $\bR$.
\end{proof}

\subsection*{$6\sA_1$-singularities with three planes}

\begin{prop}
Let $X$ be a cubic without real singular points and such that $X_{\bC}$ has 
$6\sA_1$-singularities and three planes. Then up to isomorphism, one of the following holds:
    \begin{itemize}
        \item $X$ contains three real planes, and is given by
         \begin{align}
\label{eqn:6a13pl1}
x_2x_3x_4+ax_1^3+x_1^2(a_1x_2+a_2x_3+a_3x_4)+x_1(x_2^2+x_3^2+x_4^2+x_5^2)=0
    \end{align}
     with $a,a_1,a_2,a_3\in\bR$.
    \item $X$ contains only one real plane, and is given by
\begin{align}\label{eqn:6a13pl2}x_2(x_3^2+x_4^2)+ax_1^3+x_1^2(a_1x_2+a_2x_3+a_3x_4)+x_1(x_2^2-x_4x_5+x_5^2)=0,
    \end{align}
       with $a,a_1,a_2,a_3\in\bR$.
    \end{itemize}
\end{prop}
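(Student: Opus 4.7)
The plan is to pin down coordinates via the three planes and their Galois action, then normalize the residual cubic. First, I argue that the three planes $\Pi_1, \Pi_2, \Pi_3 \subset X_\bC$ lie in a common hyperplane. From the class group analysis of \cite[Section 7]{CTZ}, there is a relation $\Pi_1+\Pi_2+\Pi_3\sim H$ in $\Cl(X_\bC)$ for a hyperplane class $H$; hence $\Pi_1+\Pi_2+\Pi_3$ is cut out by a uniquely determined hyperplane, which is therefore $\Gamma$-invariant and defined over $\bR$. Choose real coordinates so that this hyperplane is $\{x_1=0\}$.

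Next, the action of $\Gamma$ on $\{\Pi_1,\Pi_2,\Pi_3\}$ is either trivial (giving three real planes) or swaps two of them (giving one real plane and a conjugate pair). In the first case, pick real linear forms $x_2, x_3, x_4$ in $H$ so that $\Pi_i=\{x_1=x_{i+1}=0\}$; then $F|_{x_1=0}$ is a real scalar multiple of $x_2x_3x_4$, which we rescale to coefficient $1$. In the second case, let $x_2$ cut out the unique real plane in $H$, and choose real $x_3,x_4$ so that the conjugate planes are $\{x_1=x_3\pm ix_4=0\}$; then $F|_{x_1=0}$ is a real scalar times $x_2(x_3^2+x_4^2)$, again rescaled to $1$.

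Now write $F=[\text{three-plane term}]+x_1Q$ with $Q$ a real quadratic in $x_1,\ldots,x_5$, and simplify $Q$ using the residual coordinate freedom: individual scalings of $x_1, x_2, x_3, x_4$ (plus in case (b) an $O(2)$-action on $(x_3,x_4)$ descending from the $\bZ/2$-exchange of the conjugate pair of planes), shifts $x_i\mapsto x_i+\lambda_ix_1$ for $i=2,3,4$, and a general real affine action $x_5\mapsto \mu x_5+\nu_0x_1+\nu_2x_2+\nu_3x_3+\nu_4x_4$. Concretely, use the $x_5$-shifts to kill all cross terms of $Q$ between $x_5$ and $x_1, x_2, x_3, x_4$ (leaving only $x_5^2$), normalize its coefficient to $1$, and use the shifts $x_i\mapsto x_i+\lambda_ix_1$ together with the scalings to clean up the remaining cross terms and normalize the $x_i^2$ coefficients in $q:=Q|_{x_1=0}$. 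The residual four parameters $a, a_1, a_2, a_3$ are then the coefficients of $x_1^3$ and $x_1^2 x_i$ for $i=2,3,4$.

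The main obstacle is verifying that this sequence of coordinate changes is compatible, i.e., that each normalization does not undo the preceding ones, and that the $\sA_1$ singularity conditions at the six nodes impose no further restrictions beyond the openness of Hessian non-degeneracy. A dimension count, matching the $11$-dimensional residual group modulo scalar acting on the $15$-dimensional space of quadratics $Q$, yields the expected $4$-parameter family. The six nodes are then pinned down by $q$ to be $[0:0:0:\pm i:1]$ and its cyclic analogues in case (a), or the analogous six points in case (b), and the $\sA_1$ condition becomes an open condition satisfied on a dense set of parameters $(a,a_1,a_2,a_3)\in\bR^4$.
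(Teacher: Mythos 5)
Your route differs from the paper's: the paper first normalizes the \emph{six singular points} (together with the planes) to standard coordinates and then writes down the linear system of cubics singular at those points, whereas you first normalize the hyperplane containing the three planes and the restriction $F|_{x_1=0}$, and then try to normalize the residual quadric $Q$ with $F=(\text{triple product})+x_1Q$. Your strategy is viable, and in the three-real-planes case it essentially works: the $x_5$-shears and the shears $x_i\mapsto x_i+\lambda_i x_1$ (which feed $\lambda_2x_3x_4+\lambda_3x_2x_4+\lambda_4x_2x_3$ into $Q$ via the cubic term) do diagonalize $q=Q|_{x_1=0}$. But even there you skip the one step that is specific to $\bR$: a real scaling can only normalize the diagonal coefficients of $q$ up to sign, so to reach $x_2^2+x_3^2+x_4^2+x_5^2$ you must show all four have the same sign. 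This is where the hypothesis $X^{\mathrm{sing}}(\bR)=\emptyset$ enters: the two nodes on the real line $\Pi_i\cap\Pi_j$ are the zeros of the binary form $c_{kk}x_k^2+c_{55}x_5^2$ (after your shears), and their non-reality forces $c_{kk}c_{55}>0$ for each $k$. Without this the "normalize the $x_i^2$ coefficients" step is unjustified, and indeed ruling out other signatures is the substance of a real classification.

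The genuine gap is in the one-real-plane case, which you dismiss as "analogous". It is not: there the lines $\Pi_1\cap\Pi_2'$ and $\Pi_1\cap\Pi_3'$ are a \emph{conjugate pair} of lines, and complex conjugation swaps them rather than acting on each; consequently the two nodes on each such line need not form a conjugate pair and the restriction of $q$ to these lines need not be definite — it only needs distinct roots. This is visible in the target \eqref{eqn:6a13pl2}, where $q=x_2^2-x_4x_5+x_5^2$ has vanishing $x_3^2$ and $x_4^2$ coefficients and a surviving $x_4x_5$ cross term; your prescription "kill all cross terms of $Q$ between $x_5$ and $x_1,\dots,x_4$ and normalize the $x_i^2$ coefficients" cannot output this form, and the residual group (scalings of $x_2$, the $\mathrm{O}(2)$ on $(x_3,x_4)$, shears) is too small to diagonalize a general $q$ while preserving $x_2(x_3^2+x_4^2)$. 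So for this case you must redo the normalization with the correct constraints (definiteness only on the real line $\Pi_2'\cap\Pi_3'$, a conjugation-compatibility condition between the node pairs on the two conjugate lines), or follow the paper and normalize the six points first — the point configuration $[0:0:1:\pm i:0]$, $[0:0:1:\pm i:\pm i]$, $[0:1:0:0:\pm i]$ encodes exactly this asymmetry. Finally, your closing dimension count shows the expected number of moduli but cannot by itself establish a \emph{real} normal form, since a single complex orbit typically splits into several real ones; the statement needs the explicit reduction in both cases.
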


\begin{proof}
One may assume that the three planes of $X_{\bC}$ are contained in the hyperplane $\{x_5=0\}$. If the planes are defined over $\bR$ then we may assume that they are given by 
$$
\Pi_1=\{x_1=x_2=0\},\quad\Pi_2=\{x_1=x_3=0\},\quad\Pi_3=\{x_1=x_4=0\}
$$
and that the singular points of $X$ are
$$
p_1=[0:0:0:1:i],\quad p_2=[0:0:0:1:-i],\quad p_3=[0:1:0:0:i],
$$
$$
p_4=[0:1:0:0:-i],\quad p_5=[0:0:1:0:i],\quad p_6=[0:0:1:0:-i].
$$
All cubic threefolds singular at $p_1,\ldots, p_6$ are given by 
\begin{multline*}
x_2x_3x_4+ax_1^3+x_1^2(a_1x_2+a_2x_3+a_3x_4+a_4x_5)+\\+bx_1(x_2^2+x_3^2+x_4^2+x_5^5)+x_1(c_1x_2x_3+c_2x_2x_4+c_3x_3x_4)=0
\end{multline*}
with $a,b,a_1,a_2,a_3,a_4,c_1,c_2,c_3\in\bR$.
Up to a change of variables,
$$
a_4=c_1=c_2=c_3=0 \quad \text{  and } \quad b=1.
$$ 
We obtain \eqref{eqn:6a13pl1}. 

If only one plane is defined over $\bR$, the three planes on $X_{\bC}$ can be given by 
$$
\Pi_1=\{x_1=x_2=0\},
$$
$$
\quad\Pi_2'=\{x_1=x_3+ix_4=0\},\quad\Pi_3'=\{x_1=x_3-ix_4=0\},
$$
and the singular points of $X_{\bC}$ are
$$
p_1=[0:0:1:i:0],\quad p_3=[0:1:0:0:i],\quad p_5=[0:0:1:-i:0],
$$
$$
p_2=[0:0:1:i:i],\quad p_4=[0:1:0:0:-i],\quad p_6=[0:0:1:-i:-i].
$$
Similarly as above, one can check that $X$ is given by \eqref{eqn:6a13pl2}.
\end{proof}

%Again,  one can view  such $X$ as a quadric surface bundle via projection from a plane. 
Projecting from $\Pi_1,$ the projection map 
\begin{align}\label{eqn:proj6a1}
    \pi: X\to \bP^1, \quad (\mathbf x)\mapsto (x_1, x_2),
\end{align}
endows $X$ with the structure of a quadric surface bundle.

\begin{prop}
    Let $X$ be a cubic  given by \eqref{eqn:6a13pl1}. Put 
    $$
    \Delta_1(t_1,t_2,t_3,x):= x^4 + t_1x^3 + (a - 4)x^2 -(4t_1 + t_2t_3)x + (-4a + t_2^2 + t_3^2).
    $$
    Then one of the following holds:
    \begin{enumerate}
        \item When at least one of the following polynomials (in $x_2$)
        $$
        \Delta_1(a_1,a_2,a_3,x_2),\quad \Delta_1(a_2,a_1,a_3,x_2), \quad \Delta_1(a_3,a_2,a_1,x_2)
        $$ 
        has real roots, then $X$ is rational.
        \item When none of the polynomials (in $x_2$)
        $$
        \Delta_1(a_1,a_2,a_3,x_2),\quad \Delta_1(a_2,a_1,a_3,x_2), \quad \Delta_1(a_3,a_2,a_1,x_2)
        $$ 
        has a real root, then $X(\bR)$ is connected and contains no line disjoint from any of the planes $\Pi_i, i=1,2,3$ in $X.$
    \end{enumerate}
\end{prop}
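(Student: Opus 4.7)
The plan is to exploit the three quadric surface bundle structures $\pi_i\colon X\dashrightarrow\bP^1$ arising from projection from the real planes $\Pi_i$, $i = 1, 2, 3$. Since equation~\eqref{eqn:6a13pl1} is manifestly symmetric under the $\mathfrak{S}_3$-action permuting $(x_2, x_3, x_4)$ simultaneously with $(a_1, a_2, a_3)$ and $(\Pi_1, \Pi_2, \Pi_3)$, all three bundles are related by this symmetry and it suffices to analyze $\pi_1$. In the affine chart $x_1 = 1$ with base parameter $t = x_2$, the hyperplane $\{x_2 = tx_1\}$ cuts $X$ into $\Pi_1 \cup Q_t$, and completing squares in $x_3, x_4$ brings the residual quadric $Q_t \subset \bP^3$ to the diagonal form
\[
y_3^2 + \tfrac{4-t^2}{4}\, y_4^2 + \tfrac{-\Delta_1(a_1,a_2,a_3,t)}{4-t^2}\, x_1^2 + x_5^2 = 0, \qquad |t| \neq 2.
\]
A direct determinant computation identifies the discriminant of $\pi_1$, up to a nonzero constant, with $\Delta_1(a_1,a_2,a_3,t)$, so by $\mathfrak{S}_3$-symmetry the discriminants of $\pi_2, \pi_3$ are $\Delta_1(a_2,a_1,a_3,\cdot)$ and $\Delta_1(a_3,a_2,a_1,\cdot)$. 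One also records the boundary identity $\Delta_1(a_1,a_2,a_3,\pm 2) = (a_2 \mp a_3)^2 \ge 0$.

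For Part~(1), by the $\mathfrak{S}_3$-symmetry above I may assume $\Delta_1(a_1,a_2,a_3,t_0) = 0$ for some $t_0 \in \bR$, so that $Q_{t_0}$ is a real quadric of rank at most three. Reading signature off the diagonal form: when $|t_0| > 2$ the signature is $(2,1)$, and $Q_{t_0}(\bR)$ is a nondegenerate real quadric cone whose real generators are lines of $X$ disjoint from $\Pi_1$; the classical ``projection from a line and a disjoint plane'' construction recalled in Section~\ref{sect:intro} then produces a birational map $X \dashrightarrow \bP^1 \times \bP^2$, proving rationality. The main obstacle is the complementary range $|t_0| \le 2$, where $Q_{t_0}$ has signature $(3,0)$ and $Q_{t_0}(\bR)$ consists only of the cone vertex, so the line-in-fiber argument fails; here one must exploit the boundary identity and $\mathfrak{S}_3$-symmetry together with a more delicate geometric construction---for instance, blowing up the real vertex (a smooth point of $X$) to turn the strict transform of $Q_{t_0}$ into a Hirzebruch surface with a real section, or passing by symmetry to another $\pi_i$ admitting a root with $|t|>2$---to produce the required birational equivalence.

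For Part~(2), assume each of the three discriminants is strictly positive on $\bR$. Then every fiber of every $\pi_i$ is smooth, and the signature analysis shows every real fiber $Q_t(\bR)$ of $\pi_1$ is a $2$-sphere (signature $(3,1)$), on both ranges $|t| < 2$ and $|t| > 2$. Since real $2$-spheres contain no real lines, no real line of $X$ disjoint from $\Pi_1$ can lie in a fiber of $\pi_1$; combining this with the classification of complex lines on a $6\sA_1$ cubic threefold from \cite[Section 7]{CTZ} and tracking the $\Gamma$-action rules out real lines of $X$ arising as sections of $\pi_1$. By $\mathfrak{S}_3$-symmetry the same holds for $\pi_2, \pi_3$, so $X$ contains no real line disjoint from any of the three planes. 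For connectedness, all real fibers of $\pi_1$ are nonempty $2$-spheres, so $X(\bR)$ is topologically a sphere bundle over $\bP^1(\bR) \cong S^1$ with connected total space.
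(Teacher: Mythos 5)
Your setup is exactly the paper's: diagonalize the residual quadric of the projection from $\Pi_1$ in the chart $x_1=1$ and identify its determinant, up to the factor $-1/4$, with $\Delta_1(a_1,a_2,a_3,t)$; the other two projections follow by the $\fS_3$-symmetry. One small but real slip: a line contained in a fiber of $\pi_1$ lies in the hyperplane $\langle \Pi_1\rangle$-span and therefore \emph{meets} $\Pi_1$ and is disjoint from $\Pi_2$ and $\Pi_3$ (as the paper states), not the other way around. This does not affect the rationality conclusion in part (1), but your part (2) should be reorganized accordingly: every real line of $X$ not contained in $\{x_1=0\}$ lies in a fiber of exactly one $\pi_i$ and is disjoint precisely from the other two planes, while lines inside $\{x_1=0\}$ lie in one of the $\Pi_i$ and meet all three; so once no real fiber of any $\pi_i$ is a hyperboloid or a real cone, no line is disjoint from any plane. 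No appeal to the classification of complex lines or to the $\Gamma$-action is needed. With that repair, your part (2) is the paper's argument.

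The gap you flag in part (1) is genuine, and your proposed repairs do not close it. Blowing up the vertex of a cone whose only real point is that vertex cannot produce a real line or a real section, and there is no guarantee that some other $\pi_i$ has a root with $|t|>2$: the boundary identity $\Delta_1(t_1,t_2,t_3,\pm2)=(t_2\mp t_3)^2\ge 0$ permits \emph{all} real roots of \emph{all three} quartics to lie in $[-2,2]$. For instance, $a=1$, $a_1=a_2=0$, $a_3=\sqrt{6}$ gives $\Delta_1(a_1,a_2,a_3,x)=\Delta_1(a_2,a_1,a_3,x)=(x^2-1)(x^2-2)$ and $\Delta_1(a_3,a_2,a_1,x)=(x^2-4)(x^2+\sqrt6 x+1)$, whose real roots all lie in $[-2,2]$. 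Worse, a simple root in $(-2,2)$ forces $\Delta_1<0$ on an adjacent subinterval of $(-2,2)$, where your own signature table shows the fiber is \emph{empty}; in the example above the fibers of $\pi_1$ over $1<|x_2|<\sqrt2$ are empty while the fibers over $|x_2|\le 1$ are spheres disjoint from $\Pi_1(\bR)$, so $X(\bR)$ is disconnected and $X$ is not even stably rational. So the obstacle is not a missing trick: the cone-fiber argument (and, as far as I can see, the conclusion of part (1) itself) requires a real root with $|t_0|>2$, or $t_0=\pm2$ with the degenerate ternary form indefinite, which is exactly the hypothesis under which the rank-three form $u_1^2+\frac{4-t_0^2}{4}u_2^2+u_3^2$ is indefinite and the cone carries real generators. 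The paper's own proof asserts the existence of a real line in the cone fiber without this signature check, so it passes over the same point; your instinct to isolate $|t_0|\le 2$ as the problematic range was correct, but the proposal as written does not prove part (1).
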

\begin{proof}
    In the affine chart $\{x_1=1\},$ the general fiber $\pi^{-1}(x_2)$ is isomorphic to the quadric surface 
    \begin{align}\label{eqn:6a1quaform}
          \{u_1^2+(1-\frac{x_2^2}{4})u_2^2+u_3^2+ \frac{\Delta_1(a_1,a_2,a_3,x_2)}{x_2^2-4} u_4^2=0\}\subset\bP^3_{u_1,\ldots,u_4}.
    \end{align}
 When $\Delta_1(a_1,a_2,a_3,x_2)$, as a polynomial in $x_2$, has a real root $t$, the fiber~\eqref{eqn:6a1quaform} above $x_2=t$ is a quadric cone (including the cases $t=\pm2$), which contains a line intersecting $\Pi_1$ at one point.  Thus, the line is disjoint from $\Pi_2$. It follows that $X$ is rational. By symmetry, when $\Delta_1(a_2,a_1,a_3,x_2)$ (respectively,  $\Delta_1(a_3,a_2,a_1,x_2)$) has a real root, there exists a line in $X$ disjoint from $\Pi_2$ (respectively, $\Pi_3$), and $X$ is rational.  %We conclude that $X$ is rational when at least one of the three polynomials 
% $$
  %\Delta_1(a_1,a_2,a_3,x_2),\quad \Delta_1(a_2,a_1,a_3,x_2), \quad \Delta_1(a_3,a_2,a_1,x_2) \in \bR[x_2]
%$$
% has a real root. 
    
    When none of these three polynomials has a real root, the image of $\pi$ is connected but none of the fibers of $\pi$ is a hyperboloid. Therefore, $X(\bR)$ is connected but contains no line disjoint from $\Pi_1$, $\Pi_2$ or $\Pi_3$.  
\end{proof}

\begin{prop}
    Let $X$ be a cubic given by \eqref{eqn:6a13pl2}, and
    $$
    \Delta_2(x):=\frac{a_2^2}{4}x^4 -(a+a_2^2+a_3^2)x^3 + (4a - a_1)x^2 + (4a_1 - 1)x + 4\in \bR[x].
    $$
Then
    $X(\bR)$ is disconnected if and only if one of the following holds:
    \begin{enumerate}
    % \item $\Delta_2$ has 1 real root $\alpha_1$ satisfying $0<\alpha_1<4$,
        \item $\Delta_2$ has 2 distinct real roots $\alpha_1$ and $\alpha_2$, and $0<\alpha_1<\alpha_2<4$,
        %\item $\Delta_2$ has 2 distinct real roots $\alpha_1$ and $\alpha_2$ each of multiplicity 2 such that at least one of $\alpha_1$ and $\alpha_2$ is in the interval $(0,4)$,
      %   \item $\Delta_2$ has 3 distinct real %roots  $\alpha_1<\alpha_2<\alpha_3$ such that $\alpha_1$ has multiplicity 2 and $0<\alpha_1<4$,
         % \item $\Delta_2$ has 3 distinct real roots  $\alpha_1<\alpha_2<\alpha_3$ such that $\alpha_1$ has multiplicity 2 and $\alpha_1<0<\alpha_2<\alpha_3<4$,
          % \item $\Delta_2$ has 3 distinct real roots  $\alpha_1<\alpha_2<\alpha_3$ such that $\alpha_2$ has multiplicity 2 and $0<\alpha_1<\alpha_2<4$,
             % \item $\Delta_2$ has 3 distinct real roots  $\alpha_1<\alpha_2<\alpha_3$ such that $\alpha_2$ has multiplicity 2 and $\alpha_1<0<\alpha_2<4$,
               % \item $\Delta_2$ has 3 distinct real roots  $\alpha_1<\alpha_2<\alpha_3$ such that $\alpha_3$ has multiplicity 2 and $0<\alpha_1<\alpha_2<4$,
  %\item $\Delta_2$ has 3 distinct real roots  $\alpha_1<\alpha_2<\alpha_3$ such that $\alpha_3$ has multiplicity 2 and $\alpha_1<0<\alpha_3<4$,    
        \item $\Delta_2$ has 4 distinct real roots $\alpha_1<\alpha_2<\alpha_3<\alpha_4$ and $0<\alpha_1<\alpha_2<4$,
       % \item $\Delta_2$ has 4 distinct real roots $\alpha_1<\alpha_2<\alpha_3<\alpha_4$ such that $\alpha_1<0<\alpha_2<4$.      
        \item $\Delta_2$ has 4 distinct real roots $\alpha_1<\alpha_2<\alpha_3<\alpha_4$ and $\alpha_1<0<\alpha_3<\alpha_4<4$.
    \end{enumerate}
\end{prop}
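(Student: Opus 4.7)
The plan is to use the quadric surface bundle $\pi\colon X\to \bP^1$ of \eqref{eqn:proj6a1}, i.e., projection from the unique real plane $\Pi_1=\{x_1=x_2=0\}$, and to reduce connectedness of $X(\bR)$ to that of $\pi(X(\bR))\subset \bP^1(\bR)\simeq S^1$. As in Lemma~\ref{lemm:topo2d41pl}, this reduction is valid since every nonempty real fiber of $\pi$ is a connected quadric surface and the real locus of the discriminant of $\pi$ is smooth.

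First, I would compute the fiber in the affine chart $x_2=1$, $x_1=t$. Substituting into \eqref{eqn:6a13pl2} gives
\begin{equation*}
x_3^2+x_4^2+a_2t^2x_3+a_3t^2x_4+tx_5^2-tx_4x_5+(at^3+a_1t^2+t)=0.
\end{equation*}
Completing the square in $x_3$, then in $x_4$ via $w=x_4-\tfrac{t}{2}x_5+\tfrac{a_3t^2}{2}$, and finally in $x_5$ via $v=x_5+\tfrac{a_3t^2}{4-t}$, the fiber takes, in $\bP^3$, the diagonal form
\begin{equation*}
u_1^2+u_2^2+\frac{t(4-t)}{4}\,u_3^2+\frac{t\,\Delta_2(t)}{4-t}\,u_4^2=0.
\end{equation*}
Identifying the last coefficient is a direct check: multiplying the residual constant $-\tfrac{a_2^2t^4}{4}+at^3+a_1t^2+t-\tfrac{a_3^2t^4}{4-t}$ by $4(4-t)$ yields exactly $4t\,\Delta_2(t)$.

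Next, I would apply the emptiness criterion. Since the first two coefficients are $+1$, this quadric has empty real locus iff both $t(4-t)/4>0$ and $t\,\Delta_2(t)/(4-t)>0$, i.e., iff $t\in(0,4)$ and $\Delta_2(t)>0$. For $t\notin (0,4)$ the coefficient of $u_3^2$ is negative, so the fiber contains real points; the boundary cases $t=0,4,\infty$ are handled by direct substitution into \eqref{eqn:6a13pl2}. Consequently
\begin{equation*}
\pi(X(\bR))^c=\{t\in(0,4):\Delta_2(t)>0\}\subset \bP^1(\bR),
\end{equation*}
and $X(\bR)$ is disconnected iff this set has at least two connected components in $(0,4)$.

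Finally, I would analyze the root configurations of the quartic $\Delta_2$ using the pinning values $\Delta_2(0)=4>0$ and $\Delta_2(4)=-64a_3^2\le 0$, which are immediate from the coefficients. These force the sign of $\Delta_2$ on $(0,4)$ to change an odd number of times (generically), and the bad set splits into two intervals precisely in the three root patterns listed in the statement. The main subtlety to address is the careful treatment of degenerate boundary cases -- when $a_3=0$ so that $t=4$ itself is a root of $\Delta_2$, or when a root lies at $0$ or $4$ -- which must be checked separately to establish the ``if and only if''.
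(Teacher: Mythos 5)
Your proposal is correct and follows essentially the same route as the paper: projection from the real plane $\Pi_1$, diagonalization of the fibre to $u_1^2+u_2^2+\tfrac{t(4-t)}{4}u_3^2+\tfrac{t\,\Delta_2(t)}{4-t}u_4^2$ (identical to the paper's form, since $\tfrac{x_1^2}{4r}\Delta_2=\tfrac{t\Delta_2}{4-t}$ with $r=\tfrac{t(4-t)}{4}$), the same emptiness criterion ($t\in(0,4)$ and $\Delta_2(t)>0$), and the same concluding reduction to the root configuration of $\Delta_2$. Your final root-counting step is only sketched, but the paper's proof is no more explicit there, and your pinning values $\Delta_2(0)=4>0$ and $\Delta_2(4)=-64a_3^2\le 0$ are correct and actually sharpen the case analysis beyond what the paper records.
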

\begin{proof}
  In the affine chart $\{x_2=1\}$, when $x_1\ne 0,4$, the fiber $\pi^{-1}(x_1)$ is isomorphic to the quadric surface given by 
    $$
    S=\{u_1^2+u_2^2+ru_3^2+\frac{x_1^2}{4r}\cdot \Delta_2(x_1)u_4^2=0\}\subset\bP^3_{u_1,\ldots,u_4}, 
    $$
    where
    $$
    r=-\frac{x_1(x_1-4)}{4}.
    $$
%The fiber $\pi^{-1}(x_1)$ is one of the following:
%\begin{itemize}
  % \item When $\Delta_2(x_1)<0$, the fiber is a sphere.
  %  \item When $r<0$ and $\Delta_2(x_1)>0$, the fiber is a hyperboloid.
   % \item When $\Delta_2(x_1)=0$, the fiber is a quadric cone.
   % \item When $x_1=4$ and $\Delta_2(x_1)\ne0$, the fiber is a sphere.
     %\item When $x_1=0$, or $r>0$ and %$\Delta_2(x_1)>0$, the fiber is empty.
%\end{itemize}
One can check that the fiber $\pi^{-1}(x_1)$ is empty if and only if $r>0$ and $\Delta_2(x_1)>0$. Since $X$ has no real singular points and $X_\bC$ only contains three planes, $\Delta_2(x_1)$ can only have simple roots. It follows that $X(\bR)$ is disconnected in the cases described in the statement.

%Recall that $X$ is disconnected if and only if $\pi(X)$ is disconnected. One can check that this happens if and only if one of the conditions (1)--(12) described in the assertion is satisfied.
\end{proof}
%\ZZ{What about lines??????}

\subsection*{$2\sA_2+4\sA_1$ and $2\sA_3+4\sA_1$-singularities
}  

\begin{prop}
\label{sect:6rat}
    Let $X\subset \bP^4$ be a cubic such that $X_\bC$ has singularities of type
    $$
2\sA_2+4\sA_1, \quad \text{ or} \quad 2\sA_3+4\sA_1.
    $$
    Then $X$ is rational over $\bR$. 
\end{prop}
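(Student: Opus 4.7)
The strategy is, in each case, to exhibit a real plane $\Pi\subset X$ together with a real line $L\subset X$ disjoint from $\Pi$; the second standard construction recalled in the introduction (simultaneous projection from $L$ and $\Pi$) then yields a birational map $X\dashrightarrow\bP^2\times\bP^1$ defined over $\bR$, which is what is needed for rationality.

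For $2\sA_3+4\sA_1$-singularities, I would first normalize the two $\sA_3$-points as the conjugate pair $p_1=[1:i:0:0:0]$, $p_2=[1:-i:0:0:0]$, placing $X$ in the form \eqref{eqn:2sineq}, and arrange the four $\sA_1$-singularities as two further complex-conjugate pairs. Using the analysis of such cubics over $\bC$ carried out in \cite[Section 7]{CMTZ}, I would then locate a distinguished plane $\Pi\subset X_{\bC}$ containing the four $\sA_1$-points. Because these four points are permuted by $\Gamma$ within two conjugate pairs, their linear span is Galois-stable, so $\Pi$ descends to $\bR$. A direct incidence check on the normal form shows that the real line through $p_1$ and $p_2$ lies on $X$ and meets $\Pi$ trivially.

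For $2\sA_2+4\sA_1$-singularities I would proceed in the same manner: normalize the $\sA_2$-pair at $p_1,p_2$ and the four $\sA_1$-points as two further conjugate pairs, invoke the classification from \cite{CTZ} to produce a distinguished real plane $\Pi\subset X$ (containing four of the six singular points), and exhibit a real line on $X$ disjoint from $\Pi$. The natural candidates for $L$ are the real line through the $\sA_2$-pair or, failing that, the real line through the pair of $\sA_1$-points not contained in $\Pi$; in either case the required disjointness is a finite coordinate calculation.

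The main obstacle is the first step: confirming that the required real plane $\Pi$ exists for every real form of the given singularity type and is genuinely contained in $X$. Rather than reproving the complex-geometric classification of planes in such cubics, I would rely on the results already established in \cite{CTZ} and \cite{CMTZ}, and then argue formally that Galois-invariance of the configuration of singular points forces the distinguished plane to descend to $\bR$. The final verification—that some real line on $X$ is disjoint from $\Pi$—is then a short direct computation in the normal forms produced along the way.
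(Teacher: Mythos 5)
Your proposal is correct and follows essentially the same route as the paper: the four $\sA_1$-points span a plane contained in $X$, the line through the remaining pair of singular points lies on $X$ and is disjoint from that plane, both are Galois-stable because the singularity types distinguish the two orbits of points, and the projection from a real line and a disjoint real plane gives the rationality. (The paper cites \cite[Section~9]{CMTZ} for the underlying complex geometry; note also that in the $2\sA_2+4\sA_1$ case all four $\sA_1$-points lie in $\Pi$, so your fallback candidate for $L$ does not arise.)
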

\begin{proof}
    Geometrically, the four $\sA_1$-points lie on one plane and the line passing through the other two singular points is disjoint from this plane, see \cite[Section 9]{CMTZ}. The plane and the line are defined over $\bR$; thus, $X$ is rational over $\bR$.
\end{proof}

\section{Eight singular points}
\label{sect:eight}

\begin{prop}
\label{prop:8-rat}
    Let $X$ be a cubic without real singular points and such that $X_{\bC}$ has $8\sA_1$-singularities. Then $X$ is (stably) rational over $\bR$ if and only if it satisfies {\bf (H1)} if and only if $X$ contains three planes over $\bR$. 
\end{prop}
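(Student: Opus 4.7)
The plan is to establish the three-way equivalence by following the template used for the $2\sD_4+2\sA_1$ case in Proposition~\ref{prop:2d4+2a1H1}. First, I would fix a real normal form for $X$. Starting from the complex form in Example~\ref{exam:8A1},
$$
(ax_1+x_2+bx_3)x_4x_5+x_4(x_3^2-x_1^2)+x_5(x_3^2-x_2^2)=0,
$$
and imposing $X^{\mathrm{sing}}(\bR)=\emptyset$, so that the eight nodes split into four complex-conjugate pairs, I obtain a family of real equations of the shape recorded in Example~\ref{exam:8A1}. From these equations one reads off the planes in $X_\bC$ directly: the hyperplanes $\{x_4=0\}$ and $\{x_5=0\}$ each cut out three planes on $X_\bC$, sharing the common plane $\{x_4=x_5=0\}$, for a total of five. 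A short case analysis of the $\Gamma$-action on this set, subject to the constraint that no singular point is real, shows that the only possibilities are (a) three planes defined over $\bR$, or (b) a unique real plane with the other four swapped in two conjugate pairs.

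In case (a), I would produce a real line in $X$ disjoint from one of the real planes---for example, the line joining two complex-conjugate singular points, or a line cut out on $X$ by a suitable real hyperplane meeting a real plane transversally. Then the standard projection from a line and a disjoint plane, recalled in the introduction, gives a birational map $X\dashrightarrow \bP^2\times\bP^1$ defined over $\bR$, proving that $X$ is rational.

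In case (b), I would compute the $\Gamma$-module structure on $\Cl(X_\bC)$. Together with a hyperplane class, the five plane classes generate $\Cl(X_\bC)$, modulo the two relations coming from the reducible hyperplane sections $\{x_4=0\}$ and $\{x_5=0\}$. Complex conjugation swaps two conjugate pairs of plane classes, and a direct Tate cohomology computation, in the spirit of \cite[Proposition~7.7]{CMTZ}, yields
$$
\rH^1(\Gamma,\Pic(\tilde X_\bC))=\bZ/2\bZ,
$$
so $X$ is not stably rational by the $(\mathbf{H1})$-obstruction. Alternatively, as suggested by the remark following Proposition~\ref{prop:2d4+2a1H1}, one may use the quadric surface bundle structure on $X$ obtained by projection from the unique real plane and the identification $\rH^1(\Gamma,\Pic(\tilde X_\bC))=\Br(\tilde X_\bR)/\Br(\bR)$, computing the Brauer class via an explicit Hilbert-symbol calculation. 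The main obstacle is the careful bookkeeping of plane-class relations needed to pin down the $\Gamma$-module structure of $\Cl(X_\bC)$; once that is in place, the cohomology computation is a straightforward exercise.
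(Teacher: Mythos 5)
Your proposal follows essentially the same route as the paper: both classify the possible $\Gamma$-actions on the configuration of eight nodes and five planes (the paper enumerates fixed-point-free involutions in $\fS_8$ preserving the incidence structure, arriving at the same dichotomy of three real planes versus one), then prove rationality in the three-plane case via a real line through a conjugate pair of nodes disjoint from a real plane, and non-stable-rationality in the one-plane case via the computation $\rH^1(\Gamma,\Pic(\tilde X_\bC))=\bZ/2$ from the class-group description. The only cosmetic difference is that you organize the case analysis around the planes read off from the normal form rather than around involutions of the singular points; your secondary suggestion of ``a line cut out by a real hyperplane'' is not needed (and not quite meaningful), but the primary argument matches the paper's.
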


\begin{proof}
We recall the geometry of $X$, over $\bC$: it has five planes $\Pi_1,\ldots,\Pi_5$ and 8 singular points $p_1.\ldots,p_5$ with the following inclusion relations:
\begin{align*}
    \Pi_1\supset\{p_1,p_2,p_6,p_8\},\\
    \Pi_2\supset\{p_1,p_2,p_5,p_7\},\\
    \Pi_3\supset\{p_5,p_6,p_7,p_8\},\\
    \Pi_4\supset\{p_3,p_4,p_5,p_6\},\\
    \Pi_5\supset\{p_3,p_4,p_7,p_8\}.
\end{align*}
%Assume that $X$ does not have a real singular point. 
Enumerating involutions $\iota\in\fS_8$ preserving the configuration of singular points and planes as above, we find that $\Gamma$ acts on the eight singular points via one of the following possibilities for $\iota$:
\begin{enumerate}
    \item $\iota=(1, 2)(3, 4)(5, 6)(7, 8)$: the planes $\Pi_3, \Pi_4, \Pi_5$ are defined over $\bR$ and the line through $p_1$ and $p_2$ is disjoint from $\Pi_4$ (cf. \cite[Section 8]{CTZ}). It follows that $X$ is rational over $\bR$.
     \item $\iota=(1, 2)(3, 4)(5, 7)(6, 8)$: the planes $\Pi_1, \Pi_2, \Pi_3$ are defined over $\bR$ and the line through $p_3$ and $p_4$ is disjoint from $\Pi_1$, thus $X$ is rational over $\bR.$
     \item $\iota= (1, 2)(3, 4)(5, 8)(6, 7)$: only $\Pi_3$ is defined over $\bR.$ One can compute (cf. \cite[Section 8]{CTZ}) that
     $$
\rH^1(\langle\iota\rangle,\Pic(\tilde X_\bC))=\bZ/2.
$$
It follows that $X$ is not stably rational over $\bR.$
\end{enumerate}
\end{proof}

\begin{rema}
All three cases in the proof of Proposition~\ref{prop:8-rat} can be indeed realized, e.g.,
    \begin{enumerate}
    \item $ a_1(x_1^2 + x_3^2)x_4 + x_4x_5^2+a_2(
    x_2^2x_5 - x_3^2x_5 + x_4^2x_5)+a_3
    x_3x_4x_5=0$,
    \item $ a_1(x_1^2 - x_3^2)x_4 + x_4x_5^2+a_2(
    x_2^2x_5 + x_3^2x_5 + x_4^2x_5)+a_3
    x_3x_4x_5=0$,
        \item $ a_1(x_1^2 + x_3^2)x_4 + x_4x_5^2+a_2(
    x_2^2x_5 + x_3^2x_5 + x_4^2x_5)+a_3
    x_3x_4x_5=0$,
    \end{enumerate}
    with general parameters $a_1,a_2,a_3\in \bR.$
\end{rema}

 \section{Non-isolated Singularities}
 
Let $X$ be a real cubic threefold such that $X_\bC$ has non-isolated singularities. As before, we assume that 
$X^{\rm{sing}}(\bR)=\emptyset$ and $X(\bC)$ is not a cone. It easily follows from 
\cite[Proposition 4.2]{yokocub} that $X^{\rm{sing}}$ is either a pointless conic or a pointless form of the rational normal quartic curve. We proceed to analyze these cases. 

 \subsection*{
 $X^{\rm{sing}}$ is a smooth conic}
Over $\bR$, the normal form of such $X$ is 
\begin{align}\label{eqn:singconic}
c+x_1q_1+x_2q_2+x_3q_3+x_4(x_1^2+x_2^2+x_3^2)=0,
\end{align}
where 
\begin{align*}
c&=a_1x_4^3+a_2x_4x_5^2+a_3x_4^2x_5+a_4x_5^3,\\
  q_1&=a_5x_4^2+a_6x_4x_5+a_7x_5^2,\\
  q_2&=a_8x_4^2+a_9x_4x_5+a_{10}x_5^2,\\
  q_3&=a_{11}x_4^2+a_{12}x_4x_5+a_{13}x_5^2,
\end{align*}
 with general parameters $a_1,\ldots,a_{13}\in\bR$. We have 
 $$
 X^{\rm{sing}}=\{x_1^2+x_2^2+x_3^2=x_4=x_5=0\}\subset\bP^4.
 $$
 Note that $X$ contains the plane $\Pi=\{x_4=x_5=0\}$. As before, projection from $\Pi$ 
yields a quadric surface bundle 
 $$
 \pi: X\to\bP^1_{x_4,x_5}.
 $$
\begin{prop}
    Let $X$ be a cubic given by \eqref{eqn:singconic}. Then $X(\bR)$ is disconnected if and only if $\Delta(x)$ has four distinct real roots, where $\Delta(x)$ is the quartic polynomial given by 
    \begin{multline*}
 -((a_7+a_{10})^2 + a_{13}^2)x^4+ (4a_4 - 2(a_6+a_9)(a_7+a_{10})- 2a_{12}a_{13})x^3+\\ + (4a_2 - 2(a_5+a_8)(a_7 +a_{10}) - (a_6+a_9)^2 - 2a_{11}a_{13} - a_{12}^2)x^2 +\\+ (4a_3 - 2(a_5+a_8)(a_6+a_9)- 2a_{11}a_{12})x +\\+ 4a_1 - (a_5+a_8)^2 -
    a_{11}^2
    \end{multline*}
\end{prop}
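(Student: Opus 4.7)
The plan is to exploit the quadric surface bundle structure $\pi\colon X\to\bP^1_{x_4,x_5}$ obtained by projection from the plane $\Pi=\{x_4=x_5=0\}\subset X$, and to recognize $\Delta(x)$ as the discriminant controlling when real fibers of $\pi$ are nonempty. The strategy parallels the analyses of the $2\sD_4$ and $2\sD_4+2\sA_1$ cases earlier in the paper.

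First, I would parametrize the fibers of $\pi$ and complete the square. Writing $(x_4,x_5)=(\lambda a,\lambda b)$, the fiber over $[a:b]\in\bP^1$ is cut out in $\bP^3_{x_1,x_2,x_3,\lambda}$ by
$$
a(x_1^2+x_2^2+x_3^2)+\lambda\bigl(x_1q_1(a,b)+x_2q_2(a,b)+x_3q_3(a,b)\bigr)+\lambda^2c(a,b)=0.
$$
Completing the square in $x_1,x_2,x_3$ (for $a\ne 0$) produces
$$
y_1^2+y_2^2+y_3^2\;=\;\frac{\lambda^2}{4a^2}\,Q(a,b),\qquad Q(a,b):=q_1(a,b)^2+q_2(a,b)^2+q_3(a,b)^2-4a\,c(a,b),
$$
so the real locus of $\pi^{-1}([a:b])$ is a $2$-sphere, a single point, or empty according as $Q(a,b)$ is positive, zero, or negative; the special fiber at $a=0$ is handled directly and is nonempty since $Q(0,1)=a_7^2+a_{10}^2+a_{13}^2\ge 0$.

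Next, I would pass from $X(\bR)$ to its image $\pi(X(\bR))\subset\bP^1(\bR)$. Since $X^{\mathrm{sing}}(\bR)=\emptyset$ and $\pi$ is proper with connected nonempty real fibers ($2$-spheres or points) over $\pi(X(\bR))$, a standard topological argument (connected fibers of a proper map pull back connected components to connected preimages) yields
$$
\#\text{ components of }X(\bR)\;=\;\#\text{ components of }\pi(X(\bR))\;=\;\#\text{ components of }\{Q\ge 0\}\subset\bP^1(\bR).
$$
A nonzero real binary quartic on $\bP^1(\bR)$ has nonnegative locus with two connected components precisely when it has four distinct real roots; otherwise the nonnegative locus is connected (possibly empty or all of $\bP^1(\bR)$). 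This reduces the proposition to identifying $Q$, up to a nonzero real multiple, with $\Delta(x)$.

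The last step is the explicit coefficient match: dehomogenizing $Q$ at $x_5=1$, $x_4=x$, one expands in the $a_i$'s and compares with the displayed polynomial $\Delta(x)$. The combined expressions $(a_5+a_8)$, $(a_6+a_9)$, $(a_7+a_{10})$ together with the separate $(a_{11},a_{12},a_{13})$ appearing in $\Delta$ encode the implicit normalization of the coefficient vector $(q_1,q_2,q_3)$ under the residual $O(3)$-action on $(x_1,x_2,x_3)$ preserving $x_1^2+x_2^2+x_3^2$ in the normal form \eqref{eqn:singconic}. I expect this final coefficient bookkeeping to be the main obstacle: the qualitative structure of the quartic is forced by the conceptual steps above, but verifying the precise coefficients of $\Delta$ requires carefully tracking the orthogonal normalization and expansion, and checking that the condition $X^{\mathrm{sing}}(\bR)=\emptyset$ rules out higher-order real roots of $Q$, so that ``four distinct real roots'' is the correct genericity condition in the statement.
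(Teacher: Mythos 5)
Your proposal follows the paper's proof essentially verbatim: project from the plane $\Pi=\{x_4=x_5=0\}$ to get the quadric surface bundle over $\bP^1_{x_4,x_5}$, complete the square to put each fiber in the form $u_1^2+u_2^2+u_3^2+\Delta\,u_4^2=0$, observe that the real fiber is empty exactly when the last coefficient is positive, and conclude that $X(\bR)$ is disconnected iff the resulting quartic has four distinct real roots (the paper works in the chart $x_4=1$ with $x=x_5$, and is in fact less careful than you are about the topological step and about excluding multiple roots). The one caveat concerns the coefficient match you defer: a direct expansion of $q_1^2+q_2^2+q_3^2-4c$ produces $a_7^2+a_{10}^2+a_{13}^2$, etc., rather than the grouped terms $(a_7+a_{10})^2+a_{13}^2$, etc., displayed in the statement, so your suspicion that this step needs care is warranted --- but that is a discrepancy with the printed formula (no real $O(3)$-normalization can account for the grouping, since the matrix realizing it is singular), not a flaw in your method, which coincides with the paper's.
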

\begin{proof}
    Set $x_4=1$. The fiber $\pi^{-1}(x_5)$ is isomorphic to the quadric surface given by 
    $$
    \{u_1^2+u_2^2+u_3^2+\Delta(x_5)u_4^2=0\}\subset \bP^3_{u_1,u_2,u_3,u_4}.
    $$
    It is clear that its real locus is empty if and only if $\Delta(x_5)>0$. It follows that $X(\bR)$ is disconnected if and only if $\Delta\in \bR[x]$ as a quartic polynomial has four distinct real roots.
\end{proof}
 \begin{rema}
   When $X(\bR)$ is connected, the rationality of $X$ is open, see \cite{CTA}. We provide examples with $X(\bR)$  
   \begin{itemize}
        \item   disconnected: $
    a_1=a_2=a_3=100, a_4=5, a_5=20, a_6=a_7=a_8=\ldots=a_{13}=1.
    $
    \item  connected: $a_4=-1,a_2=1, a_1=a_3=a_5=a_6=a_7=a_8=\ldots=a_{13}=0.$ 
    \end{itemize}
    The connected example also appeared in \cite[Section 11.3]{CTA}.  
 \end{rema}
 
\subsection*{Chordal cubic}
A cubic $X_{\bC}$ singular along a rational normal quartic curve $C$ is unique: it is the secant variety of $C$, also known as the {\em chordal cubic}, see, e.g., \cite{Allcub} or \cite{yokocub}. Over $\bR$, and 
under the assumption $C(\bR)=\emptyset$,
there is a unique such cubic $X$,  given by 
\begin{align}\label{eqn:chordal}
    \{x_1^2x_2 + x_1x_2^2 + x_2x_3^2 + x_1x_4^2 - 2x_3x_4x_5 - 
    (x_1+x_2)x_5^2=0\}.
\end{align}

\begin{prop}
    The chordal cubic $X$ given by \eqref{eqn:chordal} is rational.
\end{prop}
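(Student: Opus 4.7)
The plan is to exhibit $X$ as birational over $\bR$ to a $\bP^1$-bundle over $\bP^2_{\bR}$, via the classical universal-chord construction for the chordal cubic. Let $C=X^{\rm sing}$, the rational normal quartic along which $X$ is singular; by assumption $C(\bR)=\emptyset$. Over $\bC$, $X$ is the secant variety of $C$, and the incidence variety
\[
I=\{(p,D)\in X\times \Sym^2 C : p \text{ lies on the chord spanned by } D\}
\]
realizes a resolution $\tilde X\to X$ as a $\bP^1$-bundle over $\Sym^2 C$; the projection $I\to X$ is birational because a general point of $X$ lies on a unique chord of $C$.

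First I would observe that the construction descends to $\bR$: the curve $C$, the variety $\Sym^2 C$, and the incidence relation are all defined over $\bR$. Moreover, because each chord is an honest line in $\bP^4_{\bR}$, the projection $I\to\Sym^2 C$ is the projectivization $\bP(\cE)$ of the tautological rank-$2$ subbundle $\cE\subset\cO_{\Sym^2 C}^{\oplus 5}$ whose fiber over $D$ is the $2$-plane in $\bR^5$ spanning the chord. Hence $X$ is $\bR$-birational to a Zariski-locally trivial $\bP^1$-bundle over $\Sym^2 C$.

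Second, I would identify $\Sym^2 C\cong\bP^2_{\bR}$: since $C$ is a smooth genus-$0$ curve over $\bR$ with no real points, it is the nontrivial Brauer--Severi curve of class $\alpha\in\Br(\bR)\simeq\bZ/2$; the Brauer class of the $n$-th symmetric power of a Brauer--Severi curve of class $\alpha$ equals $n\alpha$, so $\Sym^2 C$ has trivial Brauer class. Finally, $\bP(\cE)$ over $\bP^2_{\bR}$ is rational: any rank-$2$ bundle $\cE$ on $\bP^2_{\bR}$ acquires a line subbundle after twisting by $\cO(n)$ for $n$ sufficiently large, giving a rational section of $\bP(\cE)\to\bP^2_{\bR}$, and hence $\bP(\cE)$ is birational to $\bP^1_{\bR}\times\bP^2_{\bR}$. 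The main technical hurdle I expect is the $\bR$-descent of the universal-chord $\bP^1$-bundle and its identification with the honest projectivization of $\cE\subset\cO^{\oplus 5}$; once this is in place together with $\Sym^2 C\cong\bP^2_{\bR}$, rationality of $X$ follows.
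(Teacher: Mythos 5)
Your argument is correct, and it rests on the same underlying geometry as the paper's proof: the fibration of $X$ by the secant lines of $C$ over $\Sym^2C\cong\bP^2_{\bR}$. The two write-ups diverge in how this is packaged. The paper realizes the map to $\Sym^2C$ concretely as the contraction $\varphi$ given by the $5$-dimensional linear system $|\cO_X(2)-C|$, whose image is the Veronese surface $S=\bP^2\hookrightarrow\bP^5$ (this is where the paper implicitly uses the fact you prove via Brauer classes, namely that $\Sym^2C$ is an honest $\bP^2$ over $\bR$ despite $C(\bR)=\emptyset$); it then trivializes the secant-line fibration birationally by pairing $\varphi$ with the projection $\pi$ from a general real plane $\Pi\subset\bP^4$, which restricts to an isomorphism on each secant line, so that $\pi\times\varphi:X\dashrightarrow\bP^1\times\bP^2$ is birational. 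You instead exhibit the fibration as the projectivization $\bP(\cE)$ of the rank-$2$ secant subbundle of $\cO^{\oplus 5}_{\Sym^2C}$ and conclude by Zariski-local triviality (your detour through twisting $\cE$ to find a line subbundle is more than is needed: a vector bundle is already Zariski-locally trivial, so $\bP(\cE)$ is birational to $\bP^1\times\bP^2$ over any trivializing open set). The paper's route avoids having to verify that the secant family is locally free across the diagonal of $\Sym^2C$ and that the whole construction descends to $\bR$, at the cost of an explicit linear-system computation; your route makes the descent and the identification $\Sym^2C\cong\bP^2_{\bR}$ explicit and conceptual, which is a genuine plus. Both proofs are complete once the respective technical points are checked.
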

\begin{proof}
 Let $C=X^{{\rm sing}}$. The linear system $|\cO_X(2)-C|$ has dimension 5 and gives a contraction 
$$
\varphi:X\dashrightarrow S,
$$
where $S=\bP^2\hookrightarrow \bP^5$ is the Veronese surface. Projection from a general plane $\Pi$ in $\bP^4$ gives a rational map
$$
\pi: X\dashrightarrow \bP^1.
$$
The product 
$$
\pi\times\varphi: X\dashrightarrow\bP^1\times\bP^2
$$
is a birational map. It follows that $X$ is rational over $\bR$. 
\end{proof}

\bibliographystyle{alpha}
\bibliography{cuberat}

\end{document}